\newtheorem{thm}{Theorem}[section]
\newtheorem{cor}[thm]{Corollary}
\newtheorem{lem}[thm]{Lemma}
\newtheorem{prop}[thm]{Proposition}
\theoremstyle{definition}
\newtheorem{definition}[thm]{Definition}
\newtheorem{remark}[thm]{Remark}
\renewcommand{\epsilon}{\varepsilon}
\renewcommand{\phi}{\varphi}
\newcommand{\defeq}{\mathrel{\mathop:}=}
\DeclareMathOperator{\spt}{spt}
\DeclareMathOperator{\inte}{int}
\DeclareMathOperator{\cl}{cl}
\DeclareMathOperator{\diam}{diam}
\begin{document}

\setlist{noitemsep}


\author{Friedrich Martin Schneider}
\title[Topological entropy]{Topological entropy of continuous actions of compactly generated groups}
\date{\today}

\begin{abstract} 
  We introduce a notion of topological entropy for continuous actions of compactly generated topological groups on compact Hausdorff spaces. It is shown that any continuous action of a compactly generated topological group on a compact Hausdorff space with vanishing topological entropy is amenable. Given an arbitrary compactly generated locally compact Hausdorff topological group $G$, we consider the canonical action of $G$ on the closed unit ball of $L^{1}(G)' \cong L^{\infty}(G)$ endowed with the corresponding weak-$^{\ast}$ topology. We prove that this action has vanishing topological entropy if and only if $G$ is compact. Furthermore, we show that the considered action has infinite topological entropy if $G$ is almost connected and non-compact.
\end{abstract}


\maketitle


\section{Introduction}\label{section:introduction}

Entropy -- in its various instances -- ranges among the best-recognized and most powerful concepts within the theory of dynamical systems. Since its introduction by Adler et al. \cite{entropy}, topological entropy has earned a great deal of attention in topological dynamics. The literature provides several essentially different notions of topological entropy for continuous group actions. Among others, there is an approach towards continuous actions of finitely generated discrete groups on compact metric spaces (see for instance \cite{Ghys,Bis1,BisUrbanski,Bis2,BisWalczak,Bis3,Walczak,specification}). For a more elaborate exposition of the general topic, we refer to \cite{Downarowicz}.

In the present article we introduce and investigate a notion of topological entropy for continuous actions of compactly generated topological groups on compact Hausdorff spaces, which generalizes the above-mentioned concept for continuous actions of finitely generated discrete groups on compact metric spaces. It is shown that any continuous action of a compactly generated topological group on a compact Hausdorff space with vanishing topological entropy is amenable (Theorem~\ref{theorem:vanishing.entropy.implies.amenability}). Furthermore, given an arbitrary compactly generated locally compact Hausdorff topological group $G$, we consider the natural continuous action $\alpha$ of $G$ on the closed unit ball of $L^{1}(G)' \cong L^{\infty}(G)$ endowed with the corresponding weak-$^{\ast}$ topology. We prove that $\alpha$ has vanishing topological entropy if and only if $G$ is compact (Theorem~\ref{theorem:positive.entropy} and Corollary~\ref{corollary:positive.entropy}). Moreover, we show that $\alpha$ has infinite topological entropy if $G$ is almost connected and non-compact (Theorem~\ref{theorem:infinite.entropy} and Corollary~\ref{corollary:infinite.entropy}).

The paper is organized as follows. In Section~\ref{section:preliminaries} we recall some basic concepts and results from topological group theory and topological dynamics. In Section~\ref{section:topological.entropy} we introduce the notion of topological entropy for continuous actions of compactly generated topological groups on compact Hausdorff spaces. In Section~\ref{section:vanishing.entropy.implies.amenability} we prove that vanishing topological entropy implies amenability. Finally, in Section~\ref{section:canonical.action} we determine compute this quantity for some canonical actions of an arbitrary compactly generated topological group.


\section{Preliminaries}\label{section:preliminaries}

In this section we want to recollect some basic notation and terminology. Throughout this paper, we denote by $\mathbb{N}$ the set of natural numbers excluding $0$. As we shall constantly be concerned with continuous actions of compactly generated topological groups, we want to address some very few notational issues and recall several well-known facts and concepts from topological group theory.

Let $G$ be a topological group, i.e., $G$ is supposed to be a group equipped with a topology such that the map $G \times G \to G$, $(x,y) \mapsto x^{-1}y$ is continuous. We shall occasionally refer to the continuous maps given by $\lambda_{G}(g) \colon G \to G$, $x \mapsto gx$ where $g \in G$. Furthermore, we denote by $e_{G}$ the neutral element of $G$. By an \emph{identity neighborhood of $G$}, we mean a neighborhood of $e_{G}$ in $G$. If $H$ is a subgroup of $G$, then we turn $G/H \defeq \{ gH \mid g \in G \}$ into a topological space by endowing it with the quotient topology generated by the map $\pi_{H} \colon G \to G/H$, $g \mapsto gH$. We denote by $N(G)$ the \emph{identity component of $G$}, i.e., the connected component of $e_{G}$ in $G$. Note that $N(G)$ constitutes a closed normal subgroup of $G$. We call $G$ \emph{almost connected} if $G/N(G)$ is compact. A subset $S \subseteq G$ is said to \emph{generate $G$} if $S$ contains the neutral element of $G$ and $G = \bigcup_{n \in \mathbb{N}} S^{n}$. We call $G$ \emph{compactly generated} if there exists a compact subset $S \subseteq G$ generating $G$.

\begin{prop}[see e.g.~\cite{LieTheory}] Every almost connected, locally compact Hausdorff topological group is compactly generated. \end{prop}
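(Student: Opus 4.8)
The plan is to exploit local compactness to produce a compact identity neighborhood whose generated subgroup is open, and then to use almost connectedness to force this subgroup to have finite index.

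First, using local compactness together with the Hausdorff property, I would fix a compact identity neighborhood $K$ of $G$; replacing $K$ by $K \cup K^{-1}$ (still compact, as inversion is a homeomorphism) I may assume $K = K^{-1}$, and of course $e_{G} \in K$. Consider the subgroup $H \defeq \bigcup_{n \in \mathbb{N}} K^{n}$ generated by $K$. Each $K^{n}$ is compact, being a continuous image of the $n$-fold product $K \times \dots \times K$, so $H$ is $\sigma$-compact. More importantly, since $K$ contains the open set $\inte(K) \ni e_{G}$, every $h \in H$ admits the open neighborhood $h\,\inte(K) \subseteq H$; hence $H$ is open, and therefore also closed, its complement being the union of the open cosets $gH$ with $g \notin H$.

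Next I would show that $H$ contains the identity component $N(G)$: as $H$ is a clopen set containing $e_{G}$ and $N(G)$ is connected with $e_{G} \in N(G)$, the nonempty set $N(G) \cap H$ is clopen in $N(G)$, whence $N(G) \subseteq H$ by connectedness. Consequently $H = \pi_{N(G)}^{-1}(\pi_{N(G)}(H))$, and since $\pi_{N(G)}$ is an open map, $\pi_{N(G)}(H)$ is an open subgroup of the compact group $G/N(G)$. An open subgroup of a compact group has finite index, because its cosets form a disjoint open cover and compactness admits only finitely many of them. Pulling this back along $\pi_{N(G)}$, the subgroup $H$ has finite index in $G$.

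Finally, choosing coset representatives $g_{1}, \dots, g_{m}$ with $G = \bigcup_{i=1}^{m} g_{i} H$, I would set $S \defeq K \cup \{ g_{1}, \dots, g_{m}, g_{1}^{-1}, \dots, g_{m}^{-1} \}$. This set $S$ is compact (a compact set together with finitely many points), contains $e_{G}$, and generates $G$, since every element of $G$ lies in some $g_{i} H$ and $H = \bigcup_{n \in \mathbb{N}} K^{n}$. The main obstacle is the passage to finite index: the decisive input is that almost connectedness makes the quotient $G/N(G)$ compact, so that the open subgroup $\pi_{N(G)}(H)$ is forced to have finite index. This is precisely where compactness of the quotient, rather than of $G$ itself, does the work, and it is what lets local compactness alone suffice to produce a single compact generating set.
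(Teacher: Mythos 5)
Your proof is correct and complete. The paper itself gives no proof of this proposition --- it is stated with a citation to the literature --- so there is nothing internal to compare against; your argument is the standard one, and every step checks out: the symmetric compact identity neighbourhood $K$ generates an open (hence clopen) subgroup $H$, connectedness forces $N(G) \subseteq H$, saturation under $\pi_{N(G)}$ together with openness of the quotient map and compactness of $G/N(G)$ gives $[G:H] < \infty$, and adjoining finitely many coset representatives (and their inverses) to $K$ yields a compact generating set in the paper's sense (it contains $e_{G}$ and $G = \bigcup_{n} S^{n}$). It is worth noting that the key mechanism you use --- that the subgroup generated by an identity neighbourhood is clopen and therefore contains $N(G)$ --- is exactly the device the paper deploys in the proof of Lemma~\ref{lemma:generating.subgroups.of.almost.connected.groups}, so your argument is very much in the spirit of the text even though the proposition itself is left to the references.
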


As usual, if $X$ is a topological space and $S \subseteq X$, then we denote by $\inte (S)$ the \emph{interior} and by $\cl (S)$ the \emph{closure of $S$} in $X$. For later use, let us furthermore note the subsequent two basic lemmata concerning locally compact Hausdorff topological groups.

\begin{lem}\label{lemma:compact_generating_systems} Let $G$ be a locally compact Hausdorff topological group and suppose that $S \subseteq G$ is compact and generates $G$. If $T \subseteq G$ is compact, then there exists $n \in \mathbb{N}$ such that $T \subseteq \inte (S^{n})$. \end{lem}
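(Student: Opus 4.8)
The plan is to reduce everything to the special case $T = \{e_G\}$, i.e.\ to first produce an $m \in \mathbb{N}$ with $e_G \in \inte(S^m)$, and then to bootstrap to an arbitrary compact $T$ by a covering argument.

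For the special case I would argue as follows. Since multiplication is continuous and $S$ is compact, each power $S^n$ is compact (being the image of the $n$-fold product $S \times \cdots \times S$ under iterated multiplication), hence closed in the Hausdorff group $G$. As $S$ generates $G$, we have $G = \bigcup_{n \in \mathbb{N}} S^n$, which exhibits $G$ as a countable union of closed sets. Because $G$ is locally compact Hausdorff, it is a Baire space, so some $S^N$ must have nonempty interior; fix $g_0 \in \inte(S^N)$. Since $g_0^{-1} \in G = \bigcup_k S^k$, there is $M \in \mathbb{N}$ with $g_0^{-1} \in S^M$. The left translation $\lambda_G(g_0^{-1})$ is a homeomorphism, so $g_0^{-1}\inte(S^N)$ is open, contains $g_0^{-1}g_0 = e_G$, and is contained in $g_0^{-1}S^N \subseteq S^M S^N = S^{M+N}$. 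Setting $m \defeq M+N$ and $U \defeq \inte(S^m)$ thus yields an open identity neighborhood $U$ with $U \subseteq S^m$.

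To finish, I would note that since $e_G \in U$, each translate $S^n U = \bigcup_{x \in S^n} xU$ is open and satisfies $S^n \subseteq S^n U \subseteq S^n S^m = S^{n+m}$; moreover the sets $S^n U$ form an increasing (because $e_G \in S$) open cover of $G$, as every $g \in S^k$ lies in $gU \subseteq S^k U$. Compactness of $T$ then yields some $p \in \mathbb{N}$ with $T \subseteq S^p U$, and since $S^p U$ is open and contained in $S^{p+m}$ we obtain $T \subseteq \inte(S^{p+m})$, which is the claim with $n \defeq p+m$.

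The main obstacle is the step producing an identity neighborhood inside a power of $S$: everything hinges on the fact that some $S^N$ has nonempty interior, and this is exactly where local compactness enters, through the Baire category theorem applied to the countable closed cover $\{S^n\}_{n \in \mathbb{N}}$. Once a single power is known to have nonempty interior, the translation and compactness arguments are routine. One minor subtlety to handle with care is that $S$ need not be symmetric, so one cannot assume $g_0^{-1} \in S^N$; this is dealt with cleanly by choosing $M$ with $g_0^{-1} \in S^M$ and absorbing it into the exponent.
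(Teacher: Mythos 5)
Your proof is correct and follows essentially the same route as the paper: both apply the Baire category theorem to the countable closed cover $\{S^n\}_{n\in\mathbb{N}}$ to find a power of $S$ with nonempty interior, and then cover the compact set $T$ by the open sets $S^nU$. The only cosmetic difference is that you first translate the open set to an identity neighborhood, whereas the paper skips this by observing directly that $T \subseteq GU = \bigcup_{n} S^n U$.
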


\begin{proof} By assumption, $G = \bigcup_{n \in \mathbb{N}} S^{n}$. Furthermore, $S^{n}$ is compact and thus closed in $G$ for each $n \in \mathbb{N}$. Since $G$ is a locally compact Hausdorff space and therefore a Baire space, there exist some $m \in \mathbb{N}$ and some non-empty open subset $U \subseteq G$ such that $U \subseteq S^{m}$. As $T$ is compact and $T \subseteq GU = \bigcup_{n \in \mathbb{N}} S^{n}U$, there exists some $n \in \mathbb{N}$ such that $T \subseteq S^{n}U$, which implies that $T \subseteq \inte (S^{mn})$. \end{proof}

\begin{lem}\label{lemma:generating.subgroups.of.almost.connected.groups} Let $G$ be an almost connected, locally compact Hausdorff topological group. If $S$ is an identity neighborhood of $G$ and $\bigcup_{n \in \mathbb{N}} S^{n}$ is compact, then $G$ is compact. \end{lem}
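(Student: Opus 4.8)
The plan is to show that the compact set $T \defeq \bigcup_{n \in \mathbb{N}} S^{n}$ is in fact a compact open subgroup of $G$, and then to exploit almost connectedness to deduce that $T$ has finite index, whence $G$ is a finite union of (compact) cosets of $T$ and is therefore compact. First I would record that, since $S$ is an identity neighborhood, we have $e_{G} \in \inte(S) \subseteq S$, so that the sets $S^{n}$ increase with $n$ and $T$ is a sub-monoid of $G$ containing the non-empty open set $\inte(S)$. By hypothesis $T$ is compact, and as a compact subset of a Hausdorff group it is closed.

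The main obstacle is to verify that $T$ is closed under inversion, i.e.\ that the sub-monoid $T$ is already a subgroup. Fix $x \in T$. Multiplication on $G$ being continuous, the closure $K \defeq \cl(\{ x^{n} \mid n \in \mathbb{N} \})$ is a compact abelian sub-semigroup of $T$, and hence contains an idempotent by the Ellis--Numakura lemma. Since the only idempotent of a group is its neutral element, we conclude that $e_{G} \in K$. Thus there is a net $(x^{n_{\alpha}})_{\alpha}$ with $n_{\alpha} \in \mathbb{N}$ converging to $e_{G}$; multiplying by $x^{-1}$ and using continuity yields $x^{n_{\alpha}-1} \to x^{-1}$. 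As each $x^{n_{\alpha}-1}$ lies in $T$ (it equals $e_{G}$ when $n_{\alpha}=1$ and a positive power of $x$ otherwise) and $T$ is closed, we obtain $x^{-1} \in T$. Hence $T = T^{-1}$, and so $T$ is a subgroup of $G$.

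It remains to combine this with the standing hypotheses. Being a subgroup with non-empty interior, $T$ is open, and therefore also closed, in $G$. The identity component $N(G)$ is connected and meets the clopen set $T$ in $e_{G}$, so $N(G) \subseteq T$; consequently the canonical continuous surjection $G/N(G) \to G/T$ exhibits $G/T$ as a compact space, while openness of $T$ makes $G/T$ discrete. As $G$ is almost connected, $G/N(G)$ is compact, so $G/T$ is compact and discrete, hence finite. Writing $G = \bigcup_{i=1}^{k} g_{i} T$ for finitely many $g_{i} \in G$ then presents $G$ as a finite union of translates of the compact set $T$, and I conclude that $G$ is compact. I expect the idempotent/net argument establishing that $T$ is inverse-closed to be the only genuinely non-trivial point; everything after it is routine once $T$ is known to be a compact open subgroup.
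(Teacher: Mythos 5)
Your proof is correct, but it follows a genuinely different route from the paper's. You invest most of your effort in showing that the full union $T = \bigcup_{n \in \mathbb{N}} S^{n}$ is inverse-closed, via the Ellis--Numakura idempotent argument (equivalently, the standard fact that a compact sub-semigroup of a Hausdorff topological group is a subgroup); you then finish by observing that the compact open subgroup $T$ contains $N(G)$, so $G/T$ is a compact discrete, hence finite, space, making $G$ a finite union of compact cosets. The paper sidesteps the only non-trivial point of your argument entirely: it shrinks $S$ to a symmetric open identity neighborhood $U \subseteq S$ with $U^{-1}=U$, so that $H = \bigcup_{n} U^{n}$ is an open subgroup for free; it then uses $N(G) \subseteq H \subseteq \bigcup_n S^n$ only to conclude that $N(G)$ is compact, and invokes the standard fact that $G$ is compact whenever both $N(G)$ and $G/N(G)$ are. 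Your approach is heavier but proves slightly more along the way (that $\bigcup_n S^n$ is itself a compact open subgroup of finite index); the paper's is more elementary, at the cost of needing the ``compact normal subgroup with compact quotient implies compact group'' fact rather than your finite-index coset decomposition. Both endgames are routine; the structural difference is that you work with $T$ directly while the paper replaces it by a symmetric sub-neighborhood precisely to avoid the inversion issue you tackle head-on.
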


\begin{proof} Let $S$ be an identity neighborhood of $G$ such that $K \defeq \bigcup_{n \in \mathbb{N}} S^{n}$ is compact. Clearly, there exists an open identity neighborhood $U$ of $G$ such that $U \subseteq S$ and $U^{-1} = U$. It is easy to see that $H \defeq \bigcup_{n \in \mathbb{N}} U^{n}$ is an open subgroup of $G$. Hence, $H$ is clopen in $G$. Therefore, $N(G) \subseteq H \subseteq K$. As $N(G)$ is closed in $G$, we conclude that $N(G)$ is compact. Since $G/N(G)$ is compact as well, this implies $G$ to be compact. \end{proof}

As we shall particularly be concerned with function spaces, let us furthermore set up some related notation. To begin with, we want to recall the very basics concerning means on function spaces. For a more elaborate study, we refer to \cite{AnalysisOnSemigroups}. Let $X$ be a set. We denote by $B(X)$ the set of all bounded real-valued functions on $X$. For $f \in B(X)$, we define $\Vert f \Vert_{\infty} \defeq \sup \{ \vert f(x)\vert \mid x \in X \}$. Let $H$ be a linear subspace of $B(X)$. A \emph{mean} on $H$ is a linear map $m \colon H \to \mathbb{R}$ such that \begin{displaymath}
	\inf \{ f(x) \mid x \in X \} \leq m(f) \leq \sup \{ f(x) \mid x \in X \}
\end{displaymath} for all $f \in H$. Note that, for each $x \in X$, the map $H \to \mathbb{R}, \, f \mapsto f(x)$ constitutes a mean on $H$. The set of all means on $H$ shall be denoted by $M(H)$. We endow $M(H)$ with the corresponding weak-$^{\ast}$ topology, i.e., the initial topology with respect to the maps $M(H) \to \mathbb{R}, \, m \mapsto m(f)$ where $f \in H$. Due to the Banach-Alaoglu theorem, $M(H)$ is a compact Hausdorff space.

Spaces of continuous functions will be of particular interest for our considerations. Suppose $X$ to be a topological space. We denote by $C(X)$ the set of all continuous real-valued functions on $X$. Besides, we put $C_{b}(X) \defeq C(X) \cap B(X)$. If $f \in C(X)$, then we call \begin{displaymath}
	\spt (f) \defeq \cl (\{ x \in X \mid f(x) \ne 0 \})
\end{displaymath} the \emph{support of $f$}. Furthermore, let $C_{c}(X) \defeq \{ f \in C(X) \mid \spt (f) \textnormal{ compact in } X \}$.

Finally, let us provide a brief introduction of the concept of amenability for continuous group actions. (For a more elaborate exposition, the reader is referred to \cite{brownc,runde,paterson}.) Suppose $X$ to be a compact Hausdorff space and $G$ to be a topological group. Let $\alpha$ be a \emph{continuous action of $G$ on $X$}, i.e., a continuous map $\alpha \colon G \times X \to X$ where $\alpha (e_{G},x) = x$ and $\alpha (gh,x) = \alpha (g,\alpha (h,x))$ for all $x \in X$ and $g,h \in G$. For every $g \in G$, let $\alpha_{g} \colon X \to X, \, x \mapsto \alpha (g,x)$. An \emph{$\alpha$-invariant mean} is a mean $m \colon C(X) \to \mathbb{R}$ such that $m(f) = m(f \circ \alpha_{g})$ for all $f \in C(X)$ and $g \in G$. We say that $\alpha$ is \emph{amenable} if there exists an $\alpha$-invariant mean.


\section{Topological entropy of continuous group actions}\label{section:topological.entropy}

In this section we introduce a notion of topological entropy for continuous actions of compactly generated groups and briefly discuss some of its basic properties.

For a start, let us set up some additional notation. For this purpose, let $X$ be a set. We denote by $\mathcal{P}(X)$ the set of all subsets of $X$. A subset $\mathcal{U} \subseteq \mathcal{P}(X)$ is said to be a \emph{covering} of $X$ if $X = \bigcup \mathcal{U}$. Given $\mathcal{U}, \mathcal{V} \subseteq \mathcal{P}(X)$, we say that $\mathcal{V}$ \emph{refines} $\mathcal{U}$ and write $\mathcal{U} \preceq \mathcal{V}$ if \begin{displaymath}
	\forall V \in \mathcal{V} \, \exists U \in \mathcal{U} \colon \, V \subseteq U .
\end{displaymath} Suppose $X$ to be a topological space. An \emph{open covering} of $X$ is a covering of $X$ consisting entirely of open subsets of $X$. We denote by $\mathcal{C}(X)$ the set of all finite open coverings of $X$.

\begin{definition} Let $G$ be a topological group and let $\alpha$ be a continuous action of $G$ on some compact Hausdorff space $X$. Let $\mathcal{U}, \mathcal{V} \in \mathcal{C}(X)$ and $S \subseteq G$. We say that \emph{$\mathcal{V}$ $S$-refines $\mathcal{U}$ with respect to $\alpha$} and write $\mathcal{U} \preceq_{S}^{\alpha} \mathcal{V}$ if $\alpha (g,\mathcal{V}) \defeq \{ \alpha(g,V) \mid V \in \mathcal{V} \}$ refines $\mathcal{U}$ for every $g \in S$. \end{definition}

The proof of the following lemma is elementary. However, Lemma~\ref{lemma:precompactness} justifies Definition~\ref{definition:complexity}.

\begin{lem}\label{lemma:precompactness} Let $G$ be a topological group and let $\alpha$ be a continuous action of $G$ on some compact Hausdorff space $X$. If $S \subseteq G$ is compact, then \begin{displaymath}
	\forall \mathcal{U} \in \mathcal{C}(X) \, \exists \mathcal{V} \in \mathcal{C}(X) \colon \, \mathcal{U} \preceq_{S}^{\alpha} \mathcal{V} .
\end{displaymath} \end{lem}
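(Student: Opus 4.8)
The plan is to exploit the continuity of the action map $\alpha \colon G \times X \to X$ together with the compactness of both $S$ and $X$ in a two-stage compactness argument of tube-lemma type. Fix an arbitrary $\mathcal{U} \in \mathcal{C}(X)$. The first and decisive stage is to produce, for every point $x \in X$, an open neighbourhood $O_{x}$ of $x$ with the property that for each $g \in S$ the set $\alpha(g, O_{x})$ is contained in some member of $\mathcal{U}$.

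To construct $O_{x}$, fix $x$ and let $g \in S$. Since $\mathcal{U}$ covers $X$, there is some $U_{g} \in \mathcal{U}$ with $\alpha(g,x) \in U_{g}$; by continuity of $\alpha$ at $(g,x)$ we may choose an open neighbourhood $W_{g}$ of $g$ in $G$ and an open neighbourhood $P_{g}$ of $x$ in $X$ with $\alpha(W_{g} \times P_{g}) \subseteq U_{g}$. As $g$ ranges over $S$, the sets $W_{g}$ form an open cover of the compact set $S$, so finitely many $W_{g_{1}}, \dots, W_{g_{k}}$ already cover $S$. Setting $O_{x} \defeq \bigcap_{i=1}^{k} P_{g_{i}}$, one obtains an open neighbourhood of $x$ such that, for any $g \in S$, picking $i$ with $g \in W_{g_{i}}$ gives $\alpha(g, O_{x}) \subseteq \alpha(W_{g_{i}} \times P_{g_{i}}) \subseteq U_{g_{i}} \in \mathcal{U}$, as desired.

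In the second stage, observe that $\{ O_{x} \mid x \in X \}$ is an open cover of the compact space $X$, so there are finitely many points $x_{1}, \dots, x_{m}$ with $X = \bigcup_{j=1}^{m} O_{x_{j}}$. Put $\mathcal{V} \defeq \{ O_{x_{1}}, \dots, O_{x_{m}} \} \in \mathcal{C}(X)$. Then for each $g \in S$ and each $V = O_{x_{j}} \in \mathcal{V}$ the first stage yields some $U \in \mathcal{U}$ with $\alpha(g, V) \subseteq U$; hence $\alpha(g, \mathcal{V})$ refines $\mathcal{U}$ for every $g \in S$, i.e.\ $\mathcal{U} \preceq_{S}^{\alpha} \mathcal{V}$, which is exactly the claim.

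I expect the only genuine subtlety to lie in the first stage: one must eliminate the dependence of the neighbourhood on $g$, and this is precisely where compactness of $S$ enters, allowing the local choices $P_{g}$ to be collapsed into a single neighbourhood $O_{x}$ by a finite intersection. The remaining step, passing to a finite subcover of $X$, is the standard compactness reduction; note that throughout the member $U \in \mathcal{U}$ may depend on both $g$ and $V$, which is all the notion of $S$-refinement requires, so no uniform choice of $U$ is needed.
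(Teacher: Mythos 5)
Your proof is correct and follows the same two-stage compactness skeleton as the paper's: for each $x \in X$ one produces an open neighbourhood $O_{x}$ with $\mathcal{U} \preceq_{S}^{\alpha} \{ O_{x} \}$, and then extracts a finite subcover of $X$. The only difference lies in the first stage: you run a direct tube-lemma argument along $S \times \{x\}$, covering $S$ by the neighbourhoods $W_{g}$, whereas the paper first uses compactness of $X$ to obtain a single identity neighbourhood $W$ and a finite covering $\mathcal{V}$ with $\alpha(W,V)$ refining $\mathcal{U}$, and then covers $S$ by finitely many translates $Wg$ --- your version is slightly more direct and never invokes the group structure of $G$, but the two arguments are essentially the same.
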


\begin{proof} Let $\mathcal{U} \in \mathcal{C}(X)$. We show that $\mathcal{N} \defeq \{ N \mid N \subseteq X \textnormal{ open}, \, \mathcal{U} \preceq_{S}^{\alpha} \{ N \} \}$ is a covering of $X$. To this end, let $x_{0} \in X$. Since $\alpha$ is continuous and $X$ is compact, there exist an open identity neighborhood $W$ in $G$ and a finite open covering $\mathcal{V}$ of $X$ such that \begin{displaymath}
	\forall V \in \mathcal{V} \, \exists U \in \mathcal{U} \colon \, \alpha (W,V) \subseteq U .
\end{displaymath} As $S$ is compact, there exists some finite subset $F \subseteq G$ such that $S \subseteq WF$. Due to finiteness of $F$, there is an open neighborhood $N$ of $x_{0}$ in $X$ such that $\mathcal{V} \preceq_{F}^{\alpha} \{ N \}$. Let $s \in S$. By assumption, we find some $g \in F$ where $s \in Wg$. There is $V \in \mathcal{V}$ such that $\alpha(g,N) \subseteq V$. Furthermore, there exists $U \in \mathcal{U}$ where $\alpha(W,V) \subseteq U$. We conclude that \begin{displaymath}
	\alpha(s,N) \subseteq \alpha(Wg,N) = \alpha(W,\alpha(g,N)) \subseteq \alpha(W,V) \subseteq U .
\end{displaymath} Hence, $x_{0} \in N \in \mathcal{N}$. Therefore, $\mathcal{N}$ is a covering of $X$. Since $X$ is compact, there exists a finite subset $\mathcal{V} \subseteq \mathcal{N}$ such that $X = \bigcup \mathcal{V}$. Evidently, $\mathcal{U} \preceq_{S}^{\alpha} \mathcal{V}$. \end{proof}

\begin{definition}\label{definition:complexity} Let $G$ be a topological group and let $\alpha$ be a continuous action of $G$ on some compact Hausdorff space $X$. If $S \subseteq G$ is compact and $\mathcal{U} \in \mathcal{C}(X)$, then we put \begin{displaymath}
	(S:\mathcal{U})_{\alpha} \defeq \inf \{ \vert \mathcal{V} \vert \mid \mathcal{V} \in \mathcal{C}(X), \, \mathcal{U} \preceq_{S}^{\alpha} \mathcal{V} \} .
\end{displaymath} \end{definition}

Throughout the rest of this article, we will be concerned with the exponential growth rate of the quantity introduced in Definition~\ref{definition:complexity}. More precisely, we are going to discuss the following invariant.

\begin{definition}\label{definition:topological.entropy} Let $G$ be a compactly generated topological group and assume $S$ to be a compact generating subset of $G$. Let $\alpha$ be a continuous action of $G$ on some compact Hausdorff space $X$. If $\mathcal{U}$ is a finite open covering of $X$, then we define \begin{displaymath}
	\eta(\alpha ,S, \mathcal{U}) \defeq \limsup_{n \to \infty} \frac{\log_{2} (S^{n}:\mathcal{U})_{\alpha}}{n} .
\end{displaymath} Furthermore, the \emph{topological entropy of $\alpha$ with respect to $S$} is defined to be the quantity \begin{displaymath}
	\eta(\alpha ,S) \defeq \sup \{ \eta(\alpha, S,\mathcal{U}) \mid \mathcal{U} \in \mathcal{C}(X) \} .
\end{displaymath} \end{definition}

In the case of continuous actions of finitely generated discrete groups on compact metric spaces, it is not difficult to see that our notion of topological entropy coincides with the concept considered in \cite{Ghys,Bis1,BisUrbanski,Bis2,BisWalczak,Bis3,Walczak,specification}. As one might expect, continuous actions of compact groups have vanishing topological entropy with respect to any compact generating subset.

\begin{prop}\label{proposition:vanishing.entropy} Let $G$ be a topological group and let $\alpha$ be a continuous action of $G$ on some compact Hausdorff space $X$. If $G$ is compact, then $\eta(\alpha,S) = 0$ for any compact generating subset $S \subseteq G$. \end{prop}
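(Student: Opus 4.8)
The plan is to show that when $G$ is compact, the complexity quantity $(S^{n}:\mathcal{U})_{\alpha}$ is bounded \emph{uniformly in $n$}, which forces the $\limsup$ defining $\eta(\alpha,S,\mathcal{U})$ to vanish for every $\mathcal{U}$, and hence $\eta(\alpha,S)=0$. The key observation is that the entire group $G$ is available as a single compact set: since $S$ generates $G$ we have $G=\bigcup_{n\in\mathbb{N}}S^{n}$, so in particular $S^{n}\subseteq G$ for every $n$. Therefore, if I can produce a single finite open covering $\mathcal{V}\in\mathcal{C}(X)$ satisfying $\mathcal{U}\preceq_{G}^{\alpha}\mathcal{V}$, then a fortiori $\mathcal{U}\preceq_{S^{n}}^{\alpha}\mathcal{V}$ for all $n$, whence $(S^{n}:\mathcal{U})_{\alpha}\le\lvert\mathcal{V}\rvert$ for all $n$.

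First I would fix an arbitrary $\mathcal{U}\in\mathcal{C}(X)$. Since $G$ is compact, Lemma~\ref{lemma:precompactness} applies directly with the compact set $S\defeq G$: it yields some $\mathcal{V}\in\mathcal{C}(X)$ with $\mathcal{U}\preceq_{G}^{\alpha}\mathcal{V}$. Next I would record the monotonicity of $S$-refinement in the group-variable: if $T\subseteq T'\subseteq G$ and $\mathcal{U}\preceq_{T'}^{\alpha}\mathcal{V}$, then trivially $\mathcal{U}\preceq_{T}^{\alpha}\mathcal{V}$, since the refinement condition is required for every $g$ in the larger set and $T\subseteq T'$. Applying this with $T=S^{n}\subseteq G=T'$ gives $\mathcal{U}\preceq_{S^{n}}^{\alpha}\mathcal{V}$ for every $n\in\mathbb{N}$. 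By the definition of $(S^{n}:\mathcal{U})_{\alpha}$ as an infimum over all $S^{n}$-refining finite open coverings, this single witness $\mathcal{V}$ bounds the quantity: $(S^{n}:\mathcal{U})_{\alpha}\le\lvert\mathcal{V}\rvert$ for all $n$.

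Finally I would compute the entropy. With $C\defeq\lvert\mathcal{V}\rvert$ a constant independent of $n$, we have
\begin{displaymath}
	\eta(\alpha,S,\mathcal{U})=\limsup_{n\to\infty}\frac{\log_{2}(S^{n}:\mathcal{U})_{\alpha}}{n}\le\limsup_{n\to\infty}\frac{\log_{2}C}{n}=0.
\end{displaymath}
Since $(S^{n}:\mathcal{U})_{\alpha}\ge 1$ always (any refining covering is non-empty), the logarithm is non-negative and the $\limsup$ equals exactly $0$. As $\mathcal{U}\in\mathcal{C}(X)$ was arbitrary, taking the supremum yields $\eta(\alpha,S)=0$. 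I do not anticipate a genuine obstacle here: the only nontrivial input is Lemma~\ref{lemma:precompactness}, and the whole argument rests on the fact that compactness of $G$ lets us treat the ambient group itself as the single compact set governing all the $S^{n}$ simultaneously. The mild care needed is simply to note the monotonicity of $\preceq_{(\cdot)}^{\alpha}$ in its subscript and the elementary bound $(S^{n}:\mathcal{U})_{\alpha}\ge 1$.
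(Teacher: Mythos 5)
Your proof is correct and follows essentially the same route as the paper: both arguments bound $(S^{n}:\mathcal{U})_{\alpha}$ uniformly in $n$ by a constant coming from the compactness of $G$ itself (the paper writes this constant as $(G:\mathcal{U})_{\alpha}$, while you exhibit an explicit witness $\mathcal{V}$ via Lemma~\ref{lemma:precompactness}), and then the $\limsup$ vanishes. If anything, you are slightly more careful than the paper in justifying that this constant is finite.
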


\begin{proof} Let $S$ be a compact generating subset of $G$. If $\mathcal{U} \in \mathcal{C}(X)$, then $(S^{n}:\mathcal{U})_{\alpha} \leq (G:\mathcal{U})_{\alpha}$ for all $n \in \mathbb{N}$, and hence \begin{displaymath}
	\eta(\alpha ,S,\mathcal{U}) = \limsup_{n \to \infty} \frac{\log_{2} (S^{n}:\mathcal{U})_{\alpha}}{n} \leq \limsup_{n \to \infty} \frac{\log_{2} (G:\mathcal{U})_{\alpha}}{n} = 0 .
\end{displaymath} This shows that $\eta(\alpha, S) = 0$. \end{proof}

Of course, the precise value of the quantity introduced in Definition~\ref{definition:topological.entropy} depends upon the choice of a compact generating system. However, we observe the following fact about continuous actions of locally compact Hausdorff topological groups, which justifies Definition~\ref{definition:entropy.type}.

\begin{prop}\label{proposition:comparing.entropy} Let $G$ be a locally compact Hausdorff topological group and let $\alpha$ be a continuous action of $G$ on some compact Hausdorff space $X$. Suppose $S,T \subseteq G$ to be compact subsets generating $G$. Then \begin{displaymath}
	\frac{1}{m} \eta(\alpha,T) \leq \eta(\alpha,S) \leq n \eta(\alpha,T) ,
\end{displaymath} where $m \defeq \inf \{ k \in \mathbb{N} \mid T \subseteq S^{k} \}$ and $n \defeq \inf \{ k \in \mathbb{N} \mid S \subseteq T^{k} \}$ (cf.~Lemma~\ref{lemma:compact_generating_systems}). \end{prop}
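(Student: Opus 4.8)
The plan is to control the complexity quantity $(S^{n}:\mathcal{U})_{\alpha}$ in terms of $(T^{k}:\mathcal{U})_{\alpha}$ by exploiting the containments $T\subseteq S^{m}$ and $S\subseteq T^{n}$ coming from Lemma~\ref{lemma:compact_generating_systems}. The key observation I would establish first is a monotonicity property: if $S_{1}\subseteq S_{2}$ are compact subsets of $G$, then every covering that $S_{2}$-refines $\mathcal{U}$ automatically $S_{1}$-refines $\mathcal{U}$, since the $S$-refinement condition quantifies over $g\in S$. Consequently $(S_{1}:\mathcal{U})_{\alpha}\leq(S_{2}:\mathcal{U})_{\alpha}$ whenever $S_{1}\subseteq S_{2}$. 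This is the workhorse that converts set inclusions between powers of $S$ and $T$ into inequalities between complexities.

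Next I would combine monotonicity with the inclusions. From $S\subseteq T^{n}$ we get $S^{k}\subseteq T^{nk}$ for every $k\in\mathbb{N}$, hence $(S^{k}:\mathcal{U})_{\alpha}\leq(T^{nk}:\mathcal{U})_{\alpha}$. Taking logarithms, dividing by $k$, and passing to the limit superior yields
\begin{displaymath}
	\eta(\alpha,S,\mathcal{U})=\limsup_{k\to\infty}\frac{\log_{2}(S^{k}:\mathcal{U})_{\alpha}}{k}\leq\limsup_{k\to\infty}\frac{\log_{2}(T^{nk}:\mathcal{U})_{\alpha}}{k}=n\,\eta(\alpha,T,\mathcal{U}),
\end{displaymath}
where the last equality holds because $\limsup_{k\to\infty}\frac{\log_{2}(T^{nk}:\mathcal{U})_{\alpha}}{nk}=\eta(\alpha,T,\mathcal{U})$ (the subsequence indexed by multiples of $n$ shares the same $\limsup$ as the full sequence, a point I would verify using the fact that consecutive terms differ by a controlled factor, or more cleanly by a subadditivity argument if available). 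Taking the supremum over all $\mathcal{U}\in\mathcal{C}(X)$ gives $\eta(\alpha,S)\leq n\,\eta(\alpha,T)$, which is the right-hand inequality. The left-hand inequality $\frac{1}{m}\eta(\alpha,T)\leq\eta(\alpha,S)$ follows by the symmetric argument starting from $T\subseteq S^{m}$.

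The main obstacle I anticipate is the passage from the subsequence $\limsup$ along multiples of $n$ to the full $\limsup$, i.e.\ justifying $\limsup_{k\to\infty}\frac{\log_{2}(T^{nk}:\mathcal{U})_{\alpha}}{nk}=\eta(\alpha,T,\mathcal{U})$. The cleanest route is to first record that $(\,\cdot\,:\mathcal{U})_{\alpha}$ is submultiplicative in the sense that $(S^{j+k}:\mathcal{U})_{\alpha}\leq(S^{j}:\mathcal{U})_{\alpha}\cdot(S^{k}:\mathcal{U})_{\alpha}$ — obtained by taking common refinements and translating — so that $\log_{2}(T^{k}:\mathcal{U})_{\alpha}$ is subadditive in $k$ and Fekete's lemma turns the $\limsup$ into an honest limit. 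Once $\eta(\alpha,T,\mathcal{U})$ is a genuine limit, every subsequence converges to the same value and the subsequence issue evaporates. I would therefore insert the submultiplicativity estimate as a preliminary step, after which the inequalities fall out routinely from monotonicity and the defining inclusions.
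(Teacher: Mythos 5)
Your core argument is exactly the paper's: monotonicity of $(\,\cdot\,:\mathcal{U})_{\alpha}$ under inclusion of the acting set, the containment $S^{k}\subseteq T^{nk}$, the resulting chain of inequalities for the $\limsup$, and symmetry for the other bound. The one place you diverge is the last step, where you assert the \emph{equality} $\limsup_{k\to\infty}\frac{\log_{2}(T^{nk}:\mathcal{U})_{\alpha}}{nk}=\eta(\alpha,T,\mathcal{U})$ and then devote your final paragraph to justifying it. This is a manufactured obstacle: for the bound $\eta(\alpha,S,\mathcal{U})\leq n\,\eta(\alpha,T,\mathcal{U})$ you only need $\limsup_{k\to\infty}\frac{\log_{2}(T^{nk}:\mathcal{U})_{\alpha}}{nk}\leq\limsup_{k\to\infty}\frac{\log_{2}(T^{k}:\mathcal{U})_{\alpha}}{k}$, and the $\limsup$ along a subsequence is always at most the $\limsup$ of the full sequence --- no Fekete, no subadditivity. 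This is precisely what the paper does.

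Moreover, the submultiplicativity estimate you propose as the fix, $(S^{j+k}:\mathcal{U})_{\alpha}\leq(S^{j}:\mathcal{U})_{\alpha}\cdot(S^{k}:\mathcal{U})_{\alpha}$, is not obviously true for this quantity, and the parenthetical justification (``taking common refinements and translating'') does not go through as stated. The natural factorization $g=g_{1}g_{2}$ with $g_{1}\in S^{j}$, $g_{2}\in S^{k}$ gives $(S^{j+k}:\mathcal{U})_{\alpha}\leq(S^{k}:\mathcal{V})_{\alpha}$, where $\mathcal{V}$ realizes $(S^{j}:\mathcal{U})_{\alpha}$; but bounding $(S^{k}:\mathcal{V})_{\alpha}$ in terms of $(S^{k}:\mathcal{U})_{\alpha}$ requires a further argument, since a covering that $S^{k}$-refines the coarse covering $\mathcal{U}$ need not $S^{k}$-refine the finer covering $\mathcal{V}$. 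Drop that step entirely: replace the final ``$=$'' in your display by ``$\leq$'' and the proof is complete and identical to the paper's.
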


\begin{proof} Let $\mathcal{U} \in \mathcal{C}(X)$. Evidently, $(S^{k}:\mathcal{U}) \leq (T^{kn}:\mathcal{U})$ for all $k \in \mathbb{N}$, whence \begin{align*}
	\eta(\alpha ,S,\mathcal{U}) &= \limsup_{k \to \infty} \frac{\log_{2} (S^{k}:\mathcal{U})_{\alpha}}{k} \leq \limsup_{k \to \infty} \frac{\log_{2} (T^{kn}:\mathcal{U})_{\alpha}}{k} \\
	&= n\limsup_{k \to \infty} \frac{\log_{2} (T^{kn}:\mathcal{U})_{\alpha}}{kn} \leq n\limsup_{k \to \infty} \frac{\log_{2} (T^{k}:\mathcal{U})_{\alpha}}{k} = n \eta(\alpha ,T,\mathcal{U}) .
\end{align*} Thus, $\eta(\alpha ,S,\mathcal{U}) \leq n\eta(\alpha ,T,\mathcal{U})$. This shows that $\eta(\alpha, S) \leq n\eta(\alpha, T)$. Due to symmetry, it follows that $\eta(\alpha, T) \leq m\eta(\alpha, S)$ as well. \end{proof}

\begin{definition}\label{definition:entropy.type} Let $G$ be a compactly generated locally compact Hausdorff topological group and let $\alpha$ be a continuous action of $G$ on some compact Hausdorff space $X$. Then $\alpha$ is said to have \emph{vanishing topological entropy} if $\eta(\alpha,S) = 0$ for some (and hence any) compact generating subset $S \subseteq G$, \emph{positive topological entropy} if $\eta(\alpha,S) > 0$ for some (and hence any) compact generating subset $S \subseteq G$, \emph{finite topological entropy} if $\eta(\alpha,S) < \infty$ for some (and hence any) compact generating subset $S \subseteq G$, and \emph{infinite topological entropy} if $\eta(\alpha,S) = \infty$ for some (and hence any) compact generating subset $S \subseteq G$. \end{definition}


\section{Vanishing topological entropy implies amenability}\label{section:vanishing.entropy.implies.amenability}

In this section we prove that any continuous action of a compactly generated group on a compact Hausdorff space with vanishing topological entropy is amenable (Theorem~\ref{theorem:vanishing.entropy.implies.amenability}). For this purpose, we will utilize the following well-known combinatorial result, which is known as Hall's marriage theorem.

\begin{thm}[\cite{Hall35}]\label{theorem:hall} Let $X$ and $Y$ be finite sets and let $R \subseteq X \times Y$. Then the following are equivalent. \begin{enumerate}
	\item[(1)]	There exists an injective map $\phi \colon X \to Y$ such that $(x,\phi (x)) \in R$ for all $x \in X$.
	\item[(2)]	$\vert Z \vert \leq \vert \{ y \in Y \mid \exists x \in X \colon \, (x,y) \in R \} \vert$ for every subset $Z \subseteq X$.
\end{enumerate} \end{thm}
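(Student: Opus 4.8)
The plan is to prove the easy implication $(1) \Rightarrow (2)$ directly and to establish the substantive implication $(2) \Rightarrow (1)$ by induction on $\vert X \vert$. Throughout, it is convenient to abbreviate, for $Z \subseteq X$, the set of its neighbours by $R[Z] \defeq \{ y \in Y \mid \exists x \in Z \colon (x,y) \in R \}$, so that condition $(2)$ reads $\vert Z \vert \leq \vert R[Z] \vert$ for every $Z \subseteq X$. For $(1) \Rightarrow (2)$, suppose $\phi \colon X \to Y$ is injective with $(x,\phi(x)) \in R$ for all $x \in X$. Then $\phi(Z) \subseteq R[Z]$ for every $Z \subseteq X$, and injectivity of $\phi$ yields $\vert Z \vert = \vert \phi(Z) \vert \leq \vert R[Z] \vert$, which is precisely $(2)$.

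For the converse $(2) \Rightarrow (1)$, I would argue by induction on $\vert X \vert$. The cases $\vert X \vert \leq 1$ are immediate, since $(2)$ applied to a singleton $\{x\}$ provides some $y \in R[\{x\}]$. For the inductive step, with $\vert X \vert = n \geq 2$, I would distinguish two cases according to whether Hall's condition holds with room to spare or becomes tight on some proper subset. In \textbf{Case A}, assume that $\vert R[Z] \vert \geq \vert Z \vert + 1$ for every non-empty proper subset $Z \subsetneq X$. Here I would pick an arbitrary $x_{0} \in X$ together with some $y_{0} \in R[\{x_{0}\}]$, set $\phi(x_{0}) \defeq y_{0}$, and pass to the relation induced by $R$ on $(X \setminus \{x_{0}\}) \times (Y \setminus \{y_{0}\})$. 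Deleting the single element $y_{0}$ decreases each neighbour set by at most one, so the strict inequalities degrade exactly to Hall's condition for the restricted relation on the smaller set $X \setminus \{x_{0}\}$; the inductive hypothesis then supplies a matching that, together with $\phi(x_{0}) = y_{0}$, completes the injection.

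In \textbf{Case B}, assume instead that some non-empty proper subset $Z_{0} \subsetneq X$ satisfies the equality $\vert R[Z_{0}] \vert = \vert Z_{0} \vert$. I would first apply the inductive hypothesis to the relation induced on $Z_{0} \times R[Z_{0}]$, noting that Hall's condition is inherited because $R[W] \subseteq R[Z_{0}]$ whenever $W \subseteq Z_{0}$; this produces an injection $\phi_{1} \colon Z_{0} \to R[Z_{0}]$, which is in fact a bijection by the equality of cardinalities. Next I would apply the inductive hypothesis to the relation induced on $(X \setminus Z_{0}) \times (Y \setminus R[Z_{0}])$. Here Hall's condition is verified by a union estimate: for $W \subseteq X \setminus Z_{0}$, one has $R[Z_{0} \cup W] = R[Z_{0}] \cup R[W]$ and, since the union $Z_{0} \cup W$ is disjoint, $(2)$ gives $\vert R[Z_{0}] \vert + \vert R[W] \setminus R[Z_{0}] \vert = \vert R[Z_{0}] \cup R[W] \vert \geq \vert Z_{0} \vert + \vert W \vert$; cancelling the tight term $\vert R[Z_{0}] \vert = \vert Z_{0} \vert$ leaves $\vert R[W] \setminus R[Z_{0}] \vert \geq \vert W \vert$, which is exactly Hall's condition for the complementary relation. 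Since $\vert X \setminus Z_{0} \vert < n$, the inductive hypothesis yields an injection $\phi_{2} \colon X \setminus Z_{0} \to Y \setminus R[Z_{0}]$. The codomains of $\phi_{1}$ and $\phi_{2}$ are disjoint, so their common extension $\phi \colon X \to Y$ is injective and satisfies $(x,\phi(x)) \in R$ throughout.

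The main obstacle is verifying that Hall's condition descends to \emph{both} restricted relations in Case B — especially to the complementary relation, where the clean cancellation hinges precisely on the tightness $\vert R[Z_{0}] \vert = \vert Z_{0} \vert$, and where the disjointness of the two codomains is what guarantees that the glued map remains injective. Everything else is bookkeeping: confirming that the restricted sets are strictly smaller (so that the induction applies) and that the trivial direction and base cases behave as expected.
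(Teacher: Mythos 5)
Your proof is correct: it is the classical Halmos--Vaughan induction on $\vert X\vert$, splitting into the case where Hall's condition holds with slack on every non-empty proper subset and the case where it is tight on some $Z_{0}$, and both reductions are verified properly (in particular the cancellation $\vert R[W]\setminus R[Z_{0}]\vert \geq \vert W\vert$ in Case B, and the disjointness of the two codomains, are exactly the points that need checking). The paper offers no proof to compare against --- it quotes the result from Hall's 1935 article --- so there is nothing to reconcile; note only that condition (2) as printed contains a typo ($\exists x \in X$ should read $\exists x \in Z$, since otherwise (2) degenerates to $\vert Z\vert \leq \vert\{y \in Y \mid \exists x \in X \colon (x,y) \in R\}\vert$, which does not imply (1)), and your reading $\vert Z\vert \leq \vert R[Z]\vert$ is the intended one and is the form actually used in Lemma~\ref{lemma:refinement.map}.
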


\begin{lem}\label{lemma:refinement.map} Let $X$ be a set. Let $\mathcal{U}$, $\mathcal{V}$, and $\mathcal{W}$ be coverings of $X$. Suppose that \begin{enumerate}
	\item[(1)]	$\mathcal{U} \cup \mathcal{V} \subseteq \mathcal{W}$,
	\item[(2)]	$\mathcal{U}$ and $\mathcal{V}$ are finite, and
	\item[(3)]	$\inf \{ \vert \mathcal{Z} \vert \mid \mathcal{Z} \subseteq \mathcal{W}, \, X = \bigcup \mathcal{W} \} = \vert \mathcal{U} \vert$.
\end{enumerate} Then there exists an injective map $\phi \colon \mathcal{U} \to \mathcal{V}$ such that $U \cap \phi (U) \ne \emptyset$ for all $U \in \mathcal{U}$. \end{lem}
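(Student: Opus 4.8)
The plan is to apply Hall's marriage theorem (Theorem~\ref{theorem:hall}) to the finite sets $\mathcal{U}$ and $\mathcal{V}$ (playing the roles of $X$ and $Y$), equipped with the incidence relation $R \defeq \{ (U,V) \in \mathcal{U} \times \mathcal{V} \mid U \cap V \ne \emptyset \}$. An injective map $\phi \colon \mathcal{U} \to \mathcal{V}$ with $(U,\phi(U)) \in R$ for all $U \in \mathcal{U}$ is precisely a map satisfying $U \cap \phi(U) \ne \emptyset$, so the conclusion of the lemma is exactly condition~(1) of Theorem~\ref{theorem:hall}. It therefore suffices to verify Hall's condition~(2): for every $\mathcal{Z} \subseteq \mathcal{U}$,
\[
	\vert \mathcal{Z} \vert \leq \vert N(\mathcal{Z}) \vert , \qquad \text{where } N(\mathcal{Z}) \defeq \{ V \in \mathcal{V} \mid \exists U \in \mathcal{Z} \colon U \cap V \ne \emptyset \} .
\]

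I would argue by contradiction, assuming that some $\mathcal{Z} \subseteq \mathcal{U}$ satisfies $\vert N(\mathcal{Z}) \vert < \vert \mathcal{Z} \vert$. The idea is an exchange argument: replace the subfamily $\mathcal{Z}$ inside the cover $\mathcal{U}$ by the strictly smaller family $N(\mathcal{Z})$, producing a subcover of $\mathcal{W}$ of cardinality less than $\vert \mathcal{U} \vert$ and thereby contradicting the minimality hypothesis~(3), which asserts that $\vert \mathcal{U} \vert$ is the least cardinality of a subfamily of $\mathcal{W}$ covering $X$. Concretely, I would set $\mathcal{Z}' \defeq (\mathcal{U} \setminus \mathcal{Z}) \cup N(\mathcal{Z})$. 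By hypothesis~(1) we have $\mathcal{U} \setminus \mathcal{Z} \subseteq \mathcal{U} \subseteq \mathcal{W}$ and $N(\mathcal{Z}) \subseteq \mathcal{V} \subseteq \mathcal{W}$, so $\mathcal{Z}' \subseteq \mathcal{W}$; and since $\mathcal{Z} \subseteq \mathcal{U}$ is finite by~(2), the cardinality count gives $\vert \mathcal{Z}' \vert \leq (\vert \mathcal{U} \vert - \vert \mathcal{Z} \vert) + \vert N(\mathcal{Z}) \vert < \vert \mathcal{U} \vert$.

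The crux, and the step I expect to require the most care, is checking that $\mathcal{Z}'$ still covers $X$. Given $x \in X$, if $x$ lies in some member of $\mathcal{U} \setminus \mathcal{Z}$ we are done; otherwise, since $\mathcal{U}$ covers $X$, there is some $U \in \mathcal{Z}$ with $x \in U$. As $\mathcal{V}$ also covers $X$, there is some $V \in \mathcal{V}$ with $x \in V$, whence $x \in U \cap V$ witnesses $U \cap V \ne \emptyset$, so that $V \in N(\mathcal{Z})$ and $x \in \bigcup N(\mathcal{Z})$. Hence $\mathcal{Z}'$ is a subcover of $\mathcal{W}$ of cardinality strictly below $\vert \mathcal{U} \vert$, contradicting~(3). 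This establishes Hall's condition, and Theorem~\ref{theorem:hall} then yields the desired injection $\phi$. Beyond this covering verification and the use of both coverings $\mathcal{U}$ and $\mathcal{V}$, everything reduces to the routine bookkeeping of the exchange and the cardinality estimate.
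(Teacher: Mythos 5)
Your proposal is correct and follows essentially the same route as the paper: both verify Hall's condition for the incidence relation $R = \{ (U,V) \mid U \cap V \ne \emptyset \}$ via the exchange $(\mathcal{U}\setminus\mathcal{Z}) \cup N(\mathcal{Z})$, which is shown to be a subcover of $\mathcal{W}$ and compared against the minimality hypothesis (3). The only difference is presentational — you argue by contradiction where the paper derives $\vert\mathcal{U}_{0}\vert \leq \vert\mathcal{V}_{0}\vert$ directly from $\vert\mathcal{U}\vert \leq \vert\mathcal{Z}\vert$.
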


\begin{proof} We shall apply Theorem~\ref{theorem:hall} to $R \defeq \{ (U,V) \in \mathcal{U} \times \mathcal{V} \mid U \cap V \ne \emptyset \}$. To this end, let $\mathcal{U}_{0} \subseteq \mathcal{U}$ and $\mathcal{V}_{0} \defeq \{ V \in \mathcal{V} \mid \exists U \in \mathcal{U}_{0} \colon \, U \cap V \ne \emptyset \}$. Evidently, $\mathcal{Z} \defeq (\mathcal{U}\setminus \mathcal{U}_{0})\cup \mathcal{V}_{0} \subseteq \mathcal{W}$. We shall prove that $\bigcup \mathcal{Z} = X$. For this purpose, let $x \in X$. If $x \in \bigcup (\mathcal{U} \setminus \mathcal{U}_{0})$, then clearly $x \in \bigcup \mathcal{Z}$. Otherwise, as $\bigcup \mathcal{U} = \bigcup \mathcal{V} = X$, there exist $U \in \mathcal{U}_{0}$ and $V \in \mathcal{V}$ such that $x \in U \cap V$, and we conclude that $V \in \mathcal{V}_{0} \subseteq \mathcal{Z}$ and therefore $x \in \bigcup \mathcal{Z}$. Now, since $\mathcal{Z}$ is a covering of $X$, it follows that $\vert \mathcal{U}\vert \leq \vert \mathcal{Z}\vert$ and hence $\vert \mathcal{U}_{0}\vert \leq \vert \mathcal{V}_{0} \vert$. Consequently, Theorem~\ref{theorem:hall} asserts that there exists an injective map $\phi \colon \mathcal{U} \to \mathcal{V}$ such that $U \cap \phi(U) \ne \emptyset$ for all $U \in \mathcal{U}$. \end{proof}

\begin{lem}\label{lemma:convergence} Let $(a_{n})_{n \in \mathbb{N}}$ be an increasing sequence in $[1,\infty)$. If $\limsup_{n \to \infty} \frac{\log_{2}(a_{n})}{n} = 0$, then $\liminf_{n \to \infty} \frac{a_{n+1}}{a_{n}} = 1$. \end{lem}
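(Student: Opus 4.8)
The plan is to argue by contradiction, exploiting the elementary fact that a sequence of ratios bounded away from $1$ forces genuine exponential growth, which is incompatible with the vanishing logarithmic growth rate. To begin, I would record the trivial direction: since $(a_{n})_{n \in \mathbb{N}}$ is increasing with values in $[1,\infty)$, every quotient satisfies $a_{n+1}/a_{n} \geq 1$, so $\liminf_{n \to \infty} a_{n+1}/a_{n} \geq 1$ holds with no hypothesis at all. It therefore suffices to prove the reverse inequality $\liminf_{n \to \infty} a_{n+1}/a_{n} \leq 1$.

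Suppose, for contradiction, that $\liminf_{n \to \infty} a_{n+1}/a_{n} > 1$. I would then fix a real number $c$ with $1 < c < \liminf_{n \to \infty} a_{n+1}/a_{n}$ and, by the definition of $\liminf$, choose $N \in \mathbb{N}$ such that $a_{n+1}/a_{n} \geq c$ for every $n \geq N$. Telescoping the quotients over $k = N, \dots, n-1$ yields
\[
	a_{n} = a_{N} \prod_{k=N}^{n-1} \frac{a_{k+1}}{a_{k}} \geq a_{N}\, c^{\,n-N}
\]
for all $n > N$. Taking base-$2$ logarithms and dividing by $n$ gives
\[
	\frac{\log_{2}(a_{n})}{n} \geq \frac{\log_{2}(a_{N})}{n} + \frac{n-N}{n}\,\log_{2}(c),
\]
and the right-hand side converges to $\log_{2}(c) > 0$ as $n \to \infty$. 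Hence $\limsup_{n \to \infty} \log_{2}(a_{n})/n \geq \log_{2}(c) > 0$, contradicting the hypothesis. This establishes $\liminf_{n \to \infty} a_{n+1}/a_{n} \leq 1$, and together with the trivial bound the claim follows.

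I expect no serious obstacle in this argument; the only points demanding a little care are the correct extraction of the eventual bound $a_{n+1}/a_{n} \geq c$ from the $\liminf$ assumption, and the observation that the initial factor $a_{N}$ is harmless, since $\log_{2}(a_{N})/n \to 0$ after division by $n$ regardless of its size.
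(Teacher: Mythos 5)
Your proposal is correct and follows essentially the same route as the paper's own proof: assume $\liminf_{n\to\infty} a_{n+1}/a_{n} > 1$, extract an eventual ratio bound, telescope to obtain exponential growth, and contradict the vanishing of $\limsup_{n\to\infty}\log_{2}(a_{n})/n$. Your explicit remark that the increasing hypothesis already gives $\liminf_{n\to\infty} a_{n+1}/a_{n}\geq 1$ is a welcome clarification that the paper leaves implicit.
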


\begin{proof} The proof proceeds by contraposition. Assume that $\liminf_{n \to \infty} \frac{a_{n+1}}{a_{n}} > 1$. Then there exist $a \in (1,\infty)$ and $n_{0} \in \mathbb{N}$ such that $\frac{a_{n+1}}{a_{n}} \geq a$ for all $n \in \mathbb{N}$ with $n \geq n_{0}$. We conclude that $a_{m+n_{0}} \geq a^{m}a_{n_{0}}$ and hence \begin{displaymath}
	\frac{\log_{2} (a_{m+n_{0}})}{m+n_{0}} \geq \frac{m\log_{2} (a) + \log_{2} (a_{n_{0}})}{m+n_{0}} \geq \frac{m}{m+n_{0}}\log_{2} (a)
\end{displaymath} for all $m \in \mathbb{N}$. Since $a > 1$, it follows that $\limsup_{n \to \infty} \frac{\log_{2} (a_{n})}{n} > 0$, as desired. \end{proof}

\begin{thm}\label{theorem:vanishing.entropy.implies.amenability} Let $\alpha$ be a continuous action of a topological group $G$ on some compact Hausdorff space $X$. If $S$ is a compact generating set for $G$ and $\eta(\alpha,S) = 0$, then $\alpha$ is amenable. \end{thm}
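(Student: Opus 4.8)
The plan is to realise an $\alpha$-invariant mean on $C(X)$ as a weak-$^{\ast}$ cluster point of averaging functionals attached to near-minimal refining covers, with Hall's theorem (Theorem~\ref{theorem:hall}) providing the matchings that force approximate invariance. Recall that each point evaluation is a mean and that $M(C(X))$ is weak-$^{\ast}$ compact and convex. It therefore suffices to exhibit, for every finite $\Phi \subseteq C(X)$, every finite $F \subseteq S$, and every $\delta > 0$, a mean $m$ with $\lvert m(f \circ \alpha_{g}) - m(f)\rvert \le \delta$ for all $f \in \Phi$ and $g \in F$: the corresponding sets of means are weak-$^{\ast}$ closed and form a downward-directed family of non-empty sets, so by compactness their total intersection is non-empty and yields a mean invariant under every $g \in S$; since $e_{G} \in S$ and $G = \bigcup_{n} S^{n}$, the cocycle identity $\alpha_{gh} = \alpha_{g} \circ \alpha_{h}$ propagates invariance from $S$ to all of $G$.

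Fix such $\Phi$ (without loss of generality with $\lVert f \rVert_{\infty} \le 1$), $F$, and $\delta$. Using continuity of the finitely many $f \in \Phi$ and compactness of $X$, I would first choose $\mathcal{U} \in \mathcal{C}(X)$ fine enough that each $f \in \Phi$ oscillates by less than $\delta$ on every member of $\mathcal{U}$. Writing $a_{n} \defeq (S^{n} : \mathcal{U})_{\alpha}$, I pick for each $n$ a minimum-cardinality $S^{n}$-refinement $\mathcal{V}_{n}$ of $\mathcal{U}$ (finite by Lemma~\ref{lemma:precompactness}), so that $\lvert \mathcal{V}_{n}\rvert = a_{n}$. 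The sequence $(a_{n})$ is increasing and bounded below by $1$, and, since $e_{G} \in S^{n}$, every $V \in \mathcal{V}_{n}$ is contained in some member of $\mathcal{U}$; in particular each $f \in \Phi$ varies by less than $\delta$ on $V$. As $\eta(\alpha, S, \mathcal{U}) = 0$, Lemma~\ref{lemma:convergence} supplies a subsequence along which $a_{n+1}/a_{n} \to 1$. Finally, choosing a point $p_{V} \in V$ for each $V$, I set $\mu_{n}(f) \defeq \tfrac{1}{a_{n}} \sum_{V \in \mathcal{V}_{n}} f(p_{V})$, which defines a mean on $C(X)$.

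The heart of the argument is a pair of matchings. For $g \in F$ and $t \in S^{n}$ one has $tg \in S^{n+1}$ and $\alpha_{t} \circ \alpha_{g} = \alpha_{tg}$, so $\alpha_{g}(\mathcal{V}_{n+1})$ is again an $S^{n}$-refinement of $\mathcal{U}$ of cardinality $a_{n+1}$; the same is true of $\mathcal{V}_{n+1}$ itself. Consequently any subcover of $\mathcal{V}_{n} \cup \mathcal{V}_{n+1}$ (respectively of $\mathcal{V}_{n} \cup \alpha_{g}(\mathcal{V}_{n+1})$) is an $S^{n}$-refinement of $\mathcal{U}$ and hence has at least $a_{n}$ members, so $\mathcal{V}_{n}$ is a minimum subcover of each. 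Lemma~\ref{lemma:refinement.map} then yields injections $\mathcal{V}_{n} \hookrightarrow \mathcal{V}_{n+1}$ and $\mathcal{V}_{n} \hookrightarrow \alpha_{g}(\mathcal{V}_{n+1})$ whose matched members intersect. Since intersecting members lie inside a single element of $\mathcal{U}$, the values of each $f \in \Phi$ at the associated points differ by less than $2\delta$, while the unmatched members together with the discrepancy between the weights $1/a_{n}$ and $1/a_{n+1}$ contribute a total of order $1 - a_{n}/a_{n+1}$. Combining the two matchings gives $\lvert \mu_{n+1}(f \circ \alpha_{g}) - \mu_{n+1}(f)\rvert \le 4\delta + O(1 - a_{n}/a_{n+1})$ for all $f \in \Phi$ and $g \in F$.

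Passing to a weak-$^{\ast}$ cluster point $m$ of $(\mu_{n+1})$ along the subsequence where $a_{n+1}/a_{n} \to 1$ then yields $\lvert m(f \circ \alpha_{g}) - m(f)\rvert \le 4\delta$; running the construction with $\delta/4$ in place of $\delta$ furnishes the required approximately invariant mean and completes the scheme above. I expect the main obstacle to be the estimate of the third paragraph: converting the purely combinatorial notion of $S^{n}$-refinement into genuine analytic control relies on the observation that each cover element sits inside a member of the fixed cover $\mathcal{U}$ (so that the oscillation of the test functions is governed by $\delta$), and one must verify that the error from unmatched cover elements and from the mismatch of the normalising weights is exactly of the form $1 - a_{n}/a_{n+1}$, which is forced to vanish along a subsequence precisely by Lemma~\ref{lemma:convergence}. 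The bookkeeping that ties these two error sources together, uniformly over the finite sets $\Phi$ and $F$, is the delicate point; the remaining steps are routine compactness and propagation arguments.
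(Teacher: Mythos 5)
Your proposal is correct and follows essentially the same route as the paper's proof: approximate invariant means built from near-minimal refining covers, with Lemma~\ref{lemma:convergence} locating a level $n$ where $(S^{n+1}:\mathcal{U})_{\alpha}/(S^{n}:\mathcal{U})_{\alpha}$ is close to $1$, Lemma~\ref{lemma:refinement.map} (Hall) supplying the matchings, and weak-$^{\ast}$ compactness of $M(C(X))$ finishing the argument. The only organisational differences are that you compare $\mu_{n+1}(f)$ and $\mu_{n+1}(f\circ\alpha_{g})$ separately to the average over the minimal $S^{n}$-cover via two matchings (which lets you dispense with the paper's preliminary $S^{-1}$-refinement step and its composite injection $\rho$), and one loose phrase: intersecting members of the two covers need not lie in a \emph{single} element of $\mathcal{U}$ --- each lies in its own element and the $2\delta$ bound comes from passing through a point of the intersection, which is what your stated estimate in fact reflects.
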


\begin{proof} Let $\epsilon \in (0,\infty)$ and let $H \subseteq C(X)$ be finite. We observe that \begin{displaymath}
	A(H,\epsilon) \defeq \{ m \in M(C(X)) \mid \forall f \in H \, \forall s \in S \colon \vert m(f) - m(f \circ \alpha_{s^{-1}}) \vert \leq \epsilon \}
\end{displaymath} is closed in the compact Hausdorff space $M(C(X))$. We shall prove that $A(H,\epsilon) \ne \emptyset$. To this end, we put $\theta \defeq \epsilon /(1 + 2\sup_{f \in H} \Vert f \Vert_{\infty})$. Since $X$ is compact, there exists $\mathcal{U}_{0} \in \mathcal{C}(X)$ such that $\diam f(U) \leq \theta/3$ for all $U \in \mathcal{U}_{0}$ and $f \in H$. According to Lemma~\ref{lemma:precompactness}, there exists $\mathcal{U} \in \mathcal{C}(X)$ such that $\mathcal{U}_{0} \preceq_{S^{-1}}^{\alpha} \mathcal{U}$. Since $\eta (\alpha,S) = 0$, Lemma~\ref{lemma:convergence} asserts the existence of some $n \in \mathbb{N}$ such that $(S^{n+1}:\mathcal{U})_{\alpha} - (S^{n}:\mathcal{U})_{\alpha} \leq \theta (S^{n}:\mathcal{U})_{\alpha}$. Let $\mathcal{V} \in \mathcal{C}(X)$ such that $\mathcal{U} \preceq_{S^{n}}^{\alpha} \mathcal{V}$ and $\vert \mathcal{V}\vert = (S^{n} : \mathcal{U})_{\alpha}$. We observe that $\bigcup \mathcal{V}\setminus \{ V\} \ne X$ for every $V \in \mathcal{V}$. Hence, there exists a map $\pi \colon \mathcal{V} \to X$ such that $\pi(V) \in V \setminus (\bigcup \mathcal{V}\setminus \{ V \})$ for all $V \in \mathcal{V}$. Note that $\pi$ is injective. Let $F \defeq \pi (\mathcal{V})$. Of course, $m \colon C(X) \to \mathbb{R}, \, f \mapsto \frac{1}{|F|} \sum_{x \in F} f(x)$ is an element of $M(C(X))$. We are going to show that $m$ is a member of $A(H,\epsilon)$. For this purpose, let $\mathcal{W} \in \mathcal{C}(X)$ such that $\mathcal{U} \preceq_{S^{n+1}}^{\alpha} \mathcal{W}$ and $|\mathcal{W}| = (S^{n+1}:\mathcal{U})_{\alpha}$. By Lemma~\ref{lemma:precompactness}, it is true that $\mathcal{N} \defeq \{ N \mid N \subseteq X \textnormal{ open}, \, \mathcal{U} \preceq_{S^{n}}^{\alpha} \{ N \} \}$ is a covering of $X$. Note that $\mathcal{V} \cup \mathcal{W} \subseteq \mathcal{N}$ and $\inf \{ \vert \mathcal{Z} \vert \mid \mathcal{Z} \subseteq \mathcal{N}, \, X = \bigcup \mathcal{N} \} = (S^{n}: \mathcal{U})_{\alpha} = \vert \mathcal{V} \vert$. Consequently, Lemma~\ref{lemma:refinement.map} asserts the existence of an injective map $\phi \colon \mathcal{V} \to \mathcal{W}$ such that $V \cap \phi (V) \ne \emptyset$ for all $V \in \mathcal{V}$. Let $s \in S$. Since $\mathcal{U} \preceq_{S^{n}}^{\alpha} \alpha(s,\mathcal{W})$, we deduce that $\alpha(s,\mathcal{W}) \subseteq \mathcal{N}$. Therefore, Lemma~\ref{lemma:refinement.map} implies the existence of an injective map $\psi \colon \mathcal{V} \to \alpha(s,\mathcal{W})$ such that $V \cap \psi (V) \ne \emptyset$ for all $V \in \mathcal{V}$. Define $E \defeq \pi (\phi^{-1}(\alpha(s^{-1},\psi (\mathcal{V}))))$ and $\rho \colon E \to \alpha (s^{-1},F), \, x \mapsto \alpha(s^{-1},\pi(\psi^{-1}(\alpha(s,\phi(\pi^{-1}(x))))))$. Evidently, $\rho$ is injective. We argue that $|f(x)-f(\rho(x))| \leq \theta$ whenever $x \in E$ and $f \in H$. To this end, let $x \in E$. Then $x \in \pi^{-1}(x)$ by choice of $\pi$, and $\pi^{-1}(x) \cap \phi(\pi^{-1}(x)) \ne \emptyset$ by hypothesis on $\phi$. Since $x \in E$, we furthermore conclude that $\alpha(s,\phi(\pi^{-1}(x))) \in \psi(\mathcal{V})$. Due to our assumption about $\psi$, this readily implies that $\alpha(s,\phi(\pi^{-1}(x))) \cap \psi^{-1}(\alpha(s,\phi (\pi^{-1}(x)))) \ne \emptyset$, which means that $\phi(\pi^{-1}(x)) \cap \alpha(s^{-1},\psi^{-1}(\alpha(s,\phi (\pi^{-1}(x))))) \ne \emptyset$. Finally, we observe that $\rho (x) \in \alpha(s^{-1},\psi^{-1}(\alpha(s,\phi (\pi^{-1}(x)))))$. Consequently, \begin{align*}
	|f(x)&-f(\rho (x))| \leq \diam f(\pi^{-1}(x) \cup \phi (\pi^{-1}(x)) \cup \alpha (s^{-1},\psi^{-1}(\alpha (s,\phi (\pi^{-1}(x)))))) \\
	&\leq \diam f(\pi^{-1}(x)) + \diam f(\phi(\pi^{-1}(x))) + \diam f(\alpha(s^{-1},\psi^{-1}(\alpha(s,\phi (\pi^{-1}(x)))))) \leq \theta
\end{align*} for all $f \in H$. Furthermore, $|E| \geq |\mathcal{W}| - 2(|\mathcal{W}| - |\mathcal{V}|) = 2(S^{n}:\mathcal{U})_{\alpha} - (S^{n+1}:\mathcal{U})_{\alpha}$ and thus $|F| - |E| \leq (S^{n+1}:\mathcal{U})_{\alpha} - (S^{n}:\mathcal{U})_{\alpha} \leq \theta (S^{n}:\mathcal{U})_{\alpha} = \theta |F|$. Hence, if $f \in H$, then \begin{align*}
	|m(f) - m(f \circ \alpha_{s^{-1}})| &= \frac{1}{|F|}\left|\sum_{x \in F} f(x) - \sum_{x \in F} f(\alpha (s^{-1},x))\right| \\
	&= \frac{1}{|F|}\left|\sum_{x \in E} (f(x)-f(\rho (x))) + \sum_{x \in F\setminus E} f(x) - \sum_{x \in \alpha (s^{-1},F)\setminus \rho (E)} f(x)\right| \\
	&\leq \frac{1}{|F|}\left(\sum_{x \in E} |f(x)-f(\rho (x))| + \sum_{x \in F\setminus E} |f(x)| + \sum_{x \in \alpha (s^{-1},F)\setminus \rho (E)} |f(x)|\right) \\
	&\leq \theta \frac{|E|}{|F|} + 2\frac{|F| - |E|}{|F|}\Vert f\Vert_{\infty} \\
	&\leq \theta + 2\Vert f\Vert_{\infty}\theta = (1+2\Vert f \Vert_{\infty}) \theta \leq \epsilon .
\end{align*} Therefore, $m$ is a member of $A(H,\epsilon)$, and thus $A(H,\epsilon) \ne \emptyset$. Since \begin{displaymath}
	A(H_{0} \cup H_{1}, \epsilon_{0} \wedge \epsilon_{1}) \subseteq A(H_{0},\epsilon_{0}) \cap A(H_{1},\epsilon_{1})
\end{displaymath} for all finite subsets $H_{0},H_{1} \subseteq C(X)$ and $\epsilon_{0},\epsilon_{1} \in (0,\infty)$, we conclude that \begin{displaymath}
	\mathcal{A} \defeq \{ A(H,\epsilon) \mid H \subseteq C(X) \textnormal{ finite, } \epsilon \in (0,\infty) \}
\end{displaymath} has the finite intersection property. Since $M(C(X))$ is compact, it follows that $\bigcap \mathcal{A} \ne \emptyset$. Let $m \in \bigcap \mathcal{A}$. Clearly, $m(f) = m(f \circ \alpha_{s^{-1}})$ for all $s \in S$ and $f \in C(X)$. Since $S$ generates $G$, this obviously implies $m$ to be an $\alpha$-invariant mean. Therefore, $\alpha$ is amenable. \end{proof}


\section{Topological entropy of $G$ acting on $B^{\infty}(G)$}\label{section:canonical.action}

In this section we investigate the topological entropy of the canonical continuous action of an arbitrary compactly generated locally compact Hausdorff topological group $G$ on the closed unit ball of $L^{1}(G)' \cong L^{\infty}(G)$ endowed with the corresponding weak-$^{\ast}$ topology. For this purpose, we will build upon some basic abstract harmonic analysis, which may be found in \cite{HewittRoss,Deitmar}.

Throughout this section, let $\mathbb{K} \in \{ \mathbb{R}, \, \mathbb{C} \}$, let $G$ be a locally compact Hausdorff topological group and let $\mu$ be a left Haar measure on $G$. We consider the corresponding Banach space $(L^{1}(G),\Vert \cdot \Vert_{1})$. According to the Banach-Alaoglu theorem, we obtain a compact Hausdorff space by endowing the dual closed unit ball $B^{\infty}(G) \defeq B_{L^{1}(G)'}[0,1]$ with the weak-$^{\ast}$ topology, i.e., the initial topology with respect to the maps $B^{\infty}(G) \to \mathbb{K}$, $l \mapsto l(f)$ where $f \in L^{1}(G)$.

\begin{remark} Consider the isometric linear embedding $\Phi \colon (L^{\infty}(G),\Vert \cdot \Vert_{\infty}) \to (L^{1}(G),\Vert \cdot \Vert_{1})'$ given by \begin{displaymath}
	\Phi (l)(f) \defeq \int_{G} l \cdot f \, d\mu \ \ \ \ (l \in L^{\infty}(G), \, f \in L^{1}(G)) .
\end{displaymath} If $G$ is compactly generated, then $G$ is $\sigma$-finite and hence $\Phi$ is surjective. In this case we may identify $B^{\infty}(G)$ with the closed dual unit ball of the Banach space $(L^{\infty}(G),\Vert \cdot \Vert_{\infty})$. \end{remark}

In order to introduce a continuous action of $G$ on $B^{\infty}(G)$ (see Proposition~\ref{proposition:canonical.action}), we shall first recall the following basic lemma.

\begin{lem}[\cite{Deitmar}]\label{lemma:uniform.continuity} Let $G$ be a topological group. If $f \in L^{1}(G)$, $g_{0} \in G$ and $\epsilon \in (0,\infty)$, then there exists an open neighborhood $U$ of $g_{0}$ in $G$ such that $\Vert (f \circ \lambda_{G}(g_{0})) - (f \circ \lambda_{G}(g)) \Vert_{1} < \epsilon$ for all $g \in U$. \end{lem}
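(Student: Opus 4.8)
The plan is to establish the estimate first for compactly supported continuous functions and then pass to general $f \in L^{1}(G)$ by density, exploiting throughout that left translation acts isometrically on $L^{1}(G)$. As a preliminary reduction, I would normalize to the case $g_{0} = e_{G}$. Since $(g_{0}h)x = g_{0}(hx)$, one has $f \circ \lambda_{G}(g_{0}h) = (f \circ \lambda_{G}(g_{0})) \circ \lambda_{G}(h)$; writing $F \defeq f \circ \lambda_{G}(g_{0}) \in L^{1}(G)$ this gives $\Vert (f \circ \lambda_{G}(g_{0})) - (f \circ \lambda_{G}(g)) \Vert_{1} = \Vert F - (F \circ \lambda_{G}(h)) \Vert_{1}$ for $g = g_{0}h$. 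Hence it suffices to produce an identity neighborhood $W$ with $\Vert F - (F \circ \lambda_{G}(h)) \Vert_{1} < \epsilon$ for all $h \in W$, and then to take $U \defeq g_{0}W$. I would also record at the outset that, by left-invariance of $\mu$, the substitution $y = hx$ yields $\Vert F \circ \lambda_{G}(h) \Vert_{1} = \Vert F \Vert_{1}$ for every $h \in G$; in particular left translation is an $L^{1}$-isometry.

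The heart of the argument, and the step I expect to be the main obstacle, is the case $F \in C_{c}(G)$, which requires a form of left uniform continuity. Fix a compact symmetric identity neighborhood $V_{0}$ and set $L \defeq V_{0}\spt(F) \cup \spt(F)$, a compact set with $\mu(L) < \infty$; for every $h \in V_{0}$ both $F$ and $F \circ \lambda_{G}(h)$ vanish off $L$. I would then show that for each $\epsilon' > 0$ there is an identity neighborhood $W \subseteq V_{0}$ with $|F(hx) - F(x)| < \epsilon'$ for all $x \in G$ and $h \in W$. To obtain this, for each point $x_{0}$ the continuity of $(h,x) \mapsto F(hx)$ at $(e_{G},x_{0})$ together with the continuity of $F$ yields an identity neighborhood $W_{x_{0}}$ and a neighborhood $O_{x_{0}} \ni x_{0}$ on which $|F(hx) - F(x)| < \epsilon'$; covering the compact set $L$ by finitely many $O_{x_{1}}, \dots, O_{x_{k}}$ and intersecting the corresponding $W_{x_{i}}$ with $V_{0}$ produces the desired $W$ (for $x \notin L$ both terms vanish when $h \in V_{0}$). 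Integrating this uniform bound over $L$ gives $\Vert F - (F \circ \lambda_{G}(h)) \Vert_{1} \leq \epsilon' \mu(L)$, so choosing $\epsilon' \defeq \epsilon / (1 + \mu(L))$ settles this case.

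Finally, I would remove the support hypothesis by density of $C_{c}(G)$ in $L^{1}(G)$. Given $F \in L^{1}(G)$ and $\epsilon > 0$, choose $F_{0} \in C_{c}(G)$ with $\Vert F - F_{0} \Vert_{1} < \epsilon/3$, and let $W$ be the identity neighborhood supplied by the previous paragraph for $F_{0}$ and $\epsilon/3$. Using the isometry property twice, one estimates for $h \in W$ that
\[
\Vert F - (F \circ \lambda_{G}(h)) \Vert_{1} \leq \Vert F - F_{0} \Vert_{1} + \Vert F_{0} - (F_{0} \circ \lambda_{G}(h)) \Vert_{1} + \Vert (F_{0} - F) \circ \lambda_{G}(h) \Vert_{1} < \epsilon ,
\]
since the first and third summands each equal $\Vert F - F_{0} \Vert_{1} < \epsilon/3$ and the middle one is $< \epsilon/3$. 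Combined with the reduction of the first paragraph, taking $U \defeq g_{0}W$ completes the proof.
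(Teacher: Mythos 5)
Your proof is correct. The paper itself gives no argument for this lemma --- it is quoted from Deitmar's textbook --- and what you have written is precisely the standard proof of strong continuity of the left regular representation on $L^{1}(G)$: reduce to $g_{0}=e_{G}$ using that left translation is an $L^{1}$-isometry, establish uniform continuity for $C_{c}(G)$ by a compactness/covering argument on $V_{0}\spt(F)\cup\spt(F)$, and pass to general $f$ by density of $C_{c}(G)$ in $L^{1}(G)$. All the steps check out (including the care taken that both $F$ and $F\circ\lambda_{G}(h)$ vanish off the fixed compact set $L$ for $h\in V_{0}$), so there is nothing to add.
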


\begin{prop}\label{proposition:canonical.action} The map $\alpha \colon G \times B^{\infty}(G) \to B^{\infty}(G)$ given by \begin{displaymath}
	\alpha (g,l)(f) \defeq l(f \circ \lambda_{G}(g) ) \ \ \ \ (g \in G, \, l \in B^{\infty}(G), \, f \in L^{1}(G)) 
\end{displaymath} constitutes a continuous action of $G$ on $B^{\infty}(G)$. \end{prop}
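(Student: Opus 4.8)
The plan is to verify the three defining properties of a continuous action in turn, treating well-definedness and the algebraic axioms as quick computations and reserving the bulk of the effort for joint continuity.

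First I would confirm that $\alpha$ is well defined, i.e. that $\alpha(g,l) \in B^{\infty}(G)$ for all $g \in G$ and $l \in B^{\infty}(G)$. For fixed $g$, left-invariance of the Haar measure $\mu$ gives $\Vert f \circ \lambda_{G}(g) \Vert_{1} = \Vert f \Vert_{1}$ for every $f \in L^{1}(G)$, so $f \mapsto f \circ \lambda_{G}(g)$ is a linear isometry of $L^{1}(G)$. Consequently $\alpha(g,l)$ is linear and satisfies $|\alpha(g,l)(f)| \leq \Vert l \Vert \, \Vert f \Vert_{1} \leq \Vert f \Vert_{1}$, whence $\alpha(g,l) \in B^{\infty}(G)$. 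For the algebraic axioms I would use $\lambda_{G}(e_{G}) = \mathrm{id}_{G}$, which yields $\alpha(e_{G},l) = l$, and the composition identity $\lambda_{G}(gh) = \lambda_{G}(g) \circ \lambda_{G}(h)$, which gives $f \circ \lambda_{G}(gh) = (f \circ \lambda_{G}(g)) \circ \lambda_{G}(h)$ and therefore $\alpha(gh,l)(f) = \alpha(g,\alpha(h,l))(f)$ for all $f \in L^{1}(G)$, i.e. $\alpha(gh,l) = \alpha(g,\alpha(h,l))$.

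The main work is continuity. Since $B^{\infty}(G)$ carries the initial topology with respect to the evaluation maps $l \mapsto l(f)$ ($f \in L^{1}(G)$), it suffices to show, for each fixed $f \in L^{1}(G)$, that the scalar map $(g,l) \mapsto l(f \circ \lambda_{G}(g))$ is continuous $G \times B^{\infty}(G) \to \mathbb{K}$. Fixing $(g_{0},l_{0})$, I would split
\begin{align*}
|l(f \circ \lambda_{G}(g)) - l_{0}(f \circ \lambda_{G}(g_{0}))| &\leq |l((f \circ \lambda_{G}(g)) - (f \circ \lambda_{G}(g_{0})))| \\
&\quad + |l(f \circ \lambda_{G}(g_{0})) - l_{0}(f \circ \lambda_{G}(g_{0}))| .
\end{align*}
The first summand is at most $\Vert l \Vert \, \Vert (f \circ \lambda_{G}(g)) - (f \circ \lambda_{G}(g_{0})) \Vert_{1} \leq \Vert (f \circ \lambda_{G}(g)) - (f \circ \lambda_{G}(g_{0})) \Vert_{1}$, which by Lemma~\ref{lemma:uniform.continuity} can be made arbitrarily small for $g$ in a suitable neighborhood of $g_{0}$, \emph{uniformly} in $l \in B^{\infty}(G)$. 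With $g_{0}$ fixed, $f \circ \lambda_{G}(g_{0})$ is a fixed element of $L^{1}(G)$, so the second summand is small for $l$ in a suitable weak-$^{\ast}$ neighborhood of $l_{0}$, directly by the definition of the weak-$^{\ast}$ topology. Combining the two estimates yields continuity at $(g_{0},l_{0})$, and as $(g_{0},l_{0})$ and $f$ were arbitrary, $\alpha$ is continuous.

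The hard part is this continuity argument, and within it the decisive point is that the dependence on $g$ is controlled uniformly over the entire unit ball $B^{\infty}(G)$. This is precisely what the bound $\Vert l \Vert \leq 1$ combined with Lemma~\ref{lemma:uniform.continuity} supplies: it decouples the problem, allowing the $g$-estimate and the $l$-estimate to be carried out separately and then added, rather than forcing a genuinely joint continuity estimate. I would therefore expect the verification to reduce entirely to an appropriate invocation of Lemma~\ref{lemma:uniform.continuity} for the translation part and the definition of the initial topology for the evaluation part.
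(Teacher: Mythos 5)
Your proposal is correct and follows essentially the same route as the paper: well-definedness via $\Vert l \Vert \leq 1$ and left-invariance of the Haar measure, the algebraic axioms as direct computations, and joint continuity by splitting the difference at the intermediate point $\alpha(g_{0},l)(f)$, controlling the translation part uniformly over the unit ball via Lemma~\ref{lemma:uniform.continuity} and the evaluation part via the weak-$^{\ast}$ topology. No gaps.
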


\begin{proof} First we substantiate that $\alpha$ is well defined. To this end, let $l \in B^{\infty}(G)$. Of course, $\alpha(g,l)$ is linear, and \begin{displaymath}
	\vert \alpha (g,l)(f) \vert = \vert l(f \circ \lambda_{G}(g)) \vert \leq \Vert l \Vert \cdot \Vert f \circ \lambda_{G}(g) \Vert_{1} \leq \Vert f \circ \lambda_{G}(g) \Vert_{1} = \Vert f \Vert_{1}
\end{displaymath} for all $f \in L^{1}(G)$. Thus, $\alpha(g,l) \in B^{\infty}(G)$. To show continuity, let $g_{0} \in G$, $l_{0} \in B^{\infty}(G)$, $f \in L^{1}(G)$ and $\epsilon \in (0,\infty)$. Now, $V \defeq \{ l \in B^{\infty}(G) \mid \vert (l_{0}-l)(f \circ \lambda_{G}(g_{0})) \vert < \frac{\epsilon}{2} \}$ is an open neighborhood of $l_{0}$ in $B^{\infty}(G)$. Besides, by Lemma~\ref{lemma:uniform.continuity}, there exists an open neighborhood $U$ of $g_{0}$ in $G$ such that $\Vert (f \circ \lambda_{G}(g_{0})) - (f \circ \lambda_{G}(g)) \Vert_{1} < \frac{\epsilon}{2}$ for every $g \in U$. Therefore, for all $g \in U$ and $l \in V$, we conclude that \begin{align*}
	\vert (\alpha (g_{0},l_{0}) - \alpha (g,l))(f) \vert &\leq \vert (\alpha (g_{0},l_{0}) - \alpha (g_{0},l))(f)\vert + \vert (\alpha (g_{0},l) - \alpha (g,l))(f) \vert \\
	&= \vert (l_{0}-l)(f \circ \lambda_{G}(g_{0}))\vert + \vert l((f \circ \lambda_{G}(g_{0})) - (f \circ \lambda_{G}(g)))\vert \\
	&< \frac{\epsilon}{2} + \Vert l \Vert \cdot \Vert (f \circ \lambda_{G}(g_{0})) - (f \circ \lambda_{G}(g)) \Vert_{1} \\
	&< \frac{\epsilon}{2} + \frac{\epsilon}{2} = \epsilon .
\end{align*} This shows that $\alpha$ is continuous. Of course, $\alpha (e_{G},l) = l$ and $\alpha (g_{0}g_{1},l) = \alpha (g_{0},\alpha (g_{1},l))$ for all $g_{0},g_{1} \in G$ and $l \in B^{\infty}(G)$. Hence, $\alpha$ constitutes a continuous action of $G$ on $B^{\infty}(G)$. \end{proof}

We aim at determining the entropy type of $\alpha$ with regard to Definition~\ref{definition:entropy.type}. As it turns out, $\alpha$ has positive topological entropy unless $G$ is compact (see Corollary~\ref{corollary:positive.entropy}). In order to prove this, we need the following auxiliary result. 

\begin{lem}\label{lemma:uniformly_discrete_product_sequences} Suppose $S$ to be a compact identity neighborhood of $G$ such that $\bigcup_{n \in \mathbb{N}} S^{n}$ is not compact. Then there is a sequence $(s_{n})_{n \in \mathbb{N}} \in S^{\mathbb{N}}$ such that $s_{m}\cdots s_{n} \notin \inte(S^{m-n})$ for all $m,n \in \mathbb{N}$ with $m > n$. \end{lem}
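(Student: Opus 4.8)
The plan is to read the desired relation $s_{m}\cdots s_{n} \notin \inte(S^{m-n})$ through the word length $\ell \colon \bigcup_{n \in \mathbb{N}} S^{n} \to \mathbb{N}\cup \{0\}$, $\ell(g) \defeq \min \{ n \geq 0 \mid g \in S^{n}\}$ (with $S^{0} \defeq \{ e_{G}\}$), which is subadditive because $S^{j}S^{k} = S^{j+k}$. Setting $P_{j} \defeq s_{j}\cdots s_{1}$ and $P_{0} \defeq e_{G}$, one has $s_{m}\cdots s_{n} = P_{m}P_{n-1}^{-1}$, so it suffices to produce a sequence whose partial products form an \emph{infinite geodesic ray}, i.e. $\ell(P_{j}P_{i}^{-1}) = j-i$ for all $i \leq j$: indeed $\ell(P_{m}P_{n-1}^{-1}) = m-n+1$ then forces $P_{m}P_{n-1}^{-1} \in S^{m-n+1}\setminus S^{m-n} \subseteq G \setminus \inte(S^{m-n})$. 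Rather than build such a ray letter by letter (which is awkward, since each new letter is constrained by all previous partial products), I would obtain it as a limit of finite geodesic segments by a compactness argument in $S^{\mathbb{N}}$.

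For the finite segments I would first record that non-compactness of $\bigcup_{n} S^{n}$ forces $S^{n} \subsetneq S^{n+1}$ for every $n \geq 0$: if $S^{n} = S^{n+1}$, then $S^{n+1} = SS^{n} = SS^{n+1} = S^{n+2}$, and inductively $\bigcup_{k} S^{k} = S^{n}$ would be compact. Hence for each $N$ there is some $g$ with $\ell(g) = N$, that is $g \in S^{N}\setminus S^{N-1}$. Writing $g = s_{N}\cdots s_{1}$ as a product of $N$ elements of $S$ and putting $P_{j} \defeq s_{j}\cdots s_{1}$, I claim that \emph{every} such factorisation is automatically geodesic. This is the crux, and it follows from subadditivity alone: for $0 \leq i \leq j \leq N$ one has the trivial bounds $\ell(P_{i}) \leq i$, $\ell(P_{j}P_{i}^{-1}) \leq j-i$, and $\ell(P_{N}P_{j}^{-1}) \leq N-j$, while $N = \ell(g) = \ell(P_{N}) \leq \ell(P_{N}P_{j}^{-1}) + \ell(P_{j}P_{i}^{-1}) + \ell(P_{i}) \leq N$; so all three bounds are equalities. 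In particular $\ell(P_{m}P_{n-1}^{-1}) = m-n+1$, whence $s_{m}\cdots s_{n} \notin \inte(S^{m-n})$ for all $N \geq m > n \geq 1$.

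Finally I would pass to the limit. For each pair $m > n$ the set $C_{m,n} \defeq \{ (s_{k})_{k} \in S^{\mathbb{N}} \mid s_{m}\cdots s_{n} \notin \inte(S^{m-n}) \}$ is closed in $S^{\mathbb{N}}$, since $(s_{k})_{k} \mapsto s_{m}\cdots s_{n}$ is continuous and $G \setminus \inte(S^{m-n})$ is closed. The segment produced above for a given $N$, padded by $s_{k} \defeq e_{G}$ for $k > N$, lies in every $C_{m,n}$ with $m \leq N$, so the family $\{ C_{m,n} \mid m > n \}$ has the finite intersection property. As $S$ is compact, $S^{\mathbb{N}}$ is compact by Tychonoff's theorem, hence $\bigcap_{m>n} C_{m,n} \neq \emptyset$, and any element of this intersection is the required sequence. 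The main obstacle is exactly this last step: the graph of partial products is not locally finite (the letter set $S$ is typically uncountable) and $S$ need not be metrisable, so neither König's lemma nor a diagonal extraction applies, and one must instead use compactness of $S^{\mathbb{N}}$. This is available only because the target condition is stated with $\inte(S^{m-n})$ and is therefore closed, even though the stronger geodesic condition $s_{m}\cdots s_{n} \notin S^{m-n}$ witnessed by the finite segments is merely open.
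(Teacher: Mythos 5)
Your proof is correct and follows essentially the same route as the paper: non-compactness of $\bigcup_{n} S^{n}$ yields elements of $S^{N}\setminus S^{N-1}$ whose factorisations into letters satisfy the constraint for all indices up to $N$, and compactness of $S^{\mathbb{N}}$ together with closedness of the constraint sets (which is exactly why the statement uses $\inte(S^{m-n})$) produces the infinite sequence. The only cosmetic difference is that you phrase the finite-segment step via subadditivity of the word length (geodesics), whereas the paper argues directly that a short subword would force $s_{k+1}\cdots s_{1}$ into $S^{k}$ --- the same computation.
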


\begin{proof} For every $k \in \mathbb{N}$, let us define \begin{displaymath}
	A_{k} \defeq \{ s \in S^{\mathbb{N}} \mid \forall m,n \in \{ 1,\ldots ,k \} \colon \, m>n \Longrightarrow s_{m}\cdots s_{n} \notin \inte (S^{m-n}) \} .
\end{displaymath} Evidently, $(A_{k})_{k \in \mathbb{N}}$ is a descending chain of closed subsets of $S^{\mathbb{N}}$. Furthermore, we observe that this sequence does not contain the empty set: Consider any arbitrary natural number $k \in \mathbb{N}$. Since $H$ is not compact, it follows that $S^{k+1} \nsubseteq S^{k}$. Let $s_{1},\ldots,s_{k+1} \in S$ such that $s_{k+1}\cdots s_{1} \notin S^{k}$, and define $s_{l} \defeq e_{G}$ for all $l \in \mathbb{N}$ with $l > k+1$. We argue that $(s_{l})_{l \in \mathbb{N}} \in A_{k}$. Otherwise there were $m,n \in \{ 1,\ldots,k \}$ such that $m>n$ and $s_{m}\cdots s_{n} \in \inte(S^{m-n}) \subseteq S^{m-n}$, wherefore $s_{k+1}\cdots s_{1} = s_{k+1}\cdots s_{m+1}s_{m}\cdots s_{n}s_{n-1}\cdots s_{1} \in S^{k-m+1}S^{m-n}S^{n-1} = S^{k}$, which would yield a contradiction. Hence, as $S^{\mathbb{N}}$ is compact, we conclude that $\bigcap_{k \in \mathbb{N}} A_{k} \ne \emptyset$. This proves the claim. \end{proof}

\begin{lem}\label{lemma:entropy.for.neighborhoods} Suppose $S$ to be a compact identity neighborhood of $G$ such that $\bigcup_{n \in \mathbb{N}} S^{n}$ is not compact. Then there is a two-element open covering $\mathcal{V}$ of $B^{\infty}(G)$ such that $(S^{2n}:\mathcal{V})_{\alpha} \geq 2^{n}$ for all $n \in \mathbb{N}$. \end{lem}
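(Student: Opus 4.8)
The plan is to bound $(S^{2n}:\mathcal{V})_{\alpha}$ from below by a counting argument: for each $n$ I would produce $2^{n}$ points of $B^{\infty}(G)$ that one \emph{fixed} two-element cover $\mathcal{V}=\{V_{0},V_{1}\}$ separates pairwise along the $S^{2n}$-orbit. Concretely, suppose I have distinct points $l_{\omega}\in B^{\infty}(G)$ indexed by $\omega\in\{0,1\}^{n}$ with the property that whenever $\omega\neq\omega'$ there is some $g\in S^{2n}$ with $\alpha(g,l_{\omega})\in V_{0}\setminus V_{1}$ and $\alpha(g,l_{\omega'})\in V_{1}\setminus V_{0}$. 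Then, for any finite open cover $\mathcal{W}$ with $\mathcal{V}\preceq_{S^{2n}}^{\alpha}\mathcal{W}$, no $W\in\mathcal{W}$ can contain two of the $l_{\omega}$: otherwise $\alpha(g,W)$ would lie inside a single member of $\mathcal{V}$, contradicting the separation. Since $\mathcal{W}$ covers $B^{\infty}(G)$, the map sending $\omega$ to a member of $\mathcal{W}$ containing $l_{\omega}$ is injective, so $\lvert\mathcal{W}\rvert\geq 2^{n}$ and therefore $(S^{2n}:\mathcal{V})_{\alpha}\geq 2^{n}$.

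To build the points I would first invoke Lemma~\ref{lemma:uniformly_discrete_product_sequences} to fix a sequence $(s_{k})_{k\in\mathbb{N}}\in S^{\mathbb{N}}$ with $s_{m}\cdots s_{k}\notin\inte(S^{m-k})$ whenever $m>k$, and define the reading positions $r_{j}\defeq s_{2j}s_{2j-1}\cdots s_{1}\in S^{2j}\subseteq S^{2n}$ for $j\in\{1,\dots,n\}$ (using $e_{G}\in S$). The doubling of the index is what makes the estimate usable: for $i>j$ one computes $r_{i}r_{j}^{-1}=s_{2i}\cdots s_{2j+1}\notin\inte(S^{2(i-j)-1})$, and since $2(i-j)-1\geq 1$ and $S\subseteq S^{2(i-j)-1}$ this gives $r_{i}r_{j}^{-1}\notin\inte(S)$. (This is exactly the source of the exponent $2n$ in the statement.) I would then fix an open symmetric identity neighborhood $U\subseteq\inte(S)$, which yields $r_{i}r_{j}^{-1}\notin U$ for all $i\neq j$ (for $i<j$ by inversion and symmetry of $U$), and afterwards an open symmetric $U'$ with $U'U'\subseteq U$. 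The translates $r_{1}^{-1}U',\dots,r_{n}^{-1}U'$ are then pairwise disjoint, since $r_{i}^{-1}U'\cap r_{j}^{-1}U'\neq\emptyset$ would force $r_{j}r_{i}^{-1}\in U'U'\subseteq U$. As $U'$ is nonempty and contained in the compact set $S$, we have $0<\mu(U')<\infty$.

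With these ingredients I would set the test function $\phi\defeq\mu(U')^{-1}\mathbf{1}_{U'}\in L^{1}(G)$ and, for $\omega\in\{0,1\}^{n}$, the function $h_{\omega}\defeq\sum_{j=1}^{n}\omega_{j}\mathbf{1}_{r_{j}^{-1}U'}\in L^{\infty}(G)$, which satisfies $\lVert h_{\omega}\rVert_{\infty}\leq 1$ precisely because the supports are disjoint; via the isometric embedding $\Phi$ of the Remark, $l_{\omega}\defeq\Phi(h_{\omega})$ has $\lVert l_{\omega}\rVert=\lVert h_{\omega}\rVert_{\infty}\leq 1$, so $l_{\omega}\in B^{\infty}(G)$. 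The two weak-$^{\ast}$ open sets $V_{0}\defeq\{l\mid\operatorname{Re}l(\phi)<\tfrac{3}{4}\}$ and $V_{1}\defeq\{l\mid\operatorname{Re}l(\phi)>\tfrac{1}{4}\}$ cover $B^{\infty}(G)$, so $\mathcal{V}\defeq\{V_{0},V_{1}\}$ is the desired cover. The decisive computation is that $\alpha(r_{j},\cdot)$ reads off the $j$-th coordinate: since $\phi\circ\lambda_{G}(r_{j})=\mu(U')^{-1}\mathbf{1}_{r_{j}^{-1}U'}$ and $h_{\omega}$ equals the constant $\omega_{j}$ throughout $r_{j}^{-1}U'$,
\[
\alpha(r_{j},l_{\omega})(\phi)=l_{\omega}\bigl(\phi\circ\lambda_{G}(r_{j})\bigr)=\mu(U')^{-1}\int_{r_{j}^{-1}U'}h_{\omega}\,d\mu=\omega_{j}\in\{0,1\}.
\]
Hence $\alpha(r_{j},l_{\omega})\in V_{0}\setminus V_{1}$ when $\omega_{j}=0$ and $\alpha(r_{j},l_{\omega})\in V_{1}\setminus V_{0}$ when $\omega_{j}=1$. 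If $\omega\neq\omega'$ differ in coordinate $j$, then $g=r_{j}\in S^{2n}$ separates $l_{\omega}$ and $l_{\omega'}$, so the counting argument of the first paragraph applies.

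The main obstacle is the index bookkeeping that simultaneously guarantees disjointness of the bump supports and keeps every reading translation inside $S^{2n}$: the estimate of Lemma~\ref{lemma:uniformly_discrete_product_sequences} only bites once the index gap is at least two, which forces the choice $r_{j}=s_{2j}\cdots s_{1}$ and accounts for the exponent $2n$. Everything else is routine — openness and covering property of $V_{0},V_{1}$, the bound $\lVert h_{\omega}\rVert_{\infty}\leq 1$, and the evaluation $\alpha(r_{j},l_{\omega})(\phi)=\omega_{j}$. I would finally stress that a single $\mathcal{V}$ works for all $n$ at once, because $\phi$, $U$ and $U'$ are chosen independently of $n$ and only the finite data $(r_{j})_{j\leq n}$ and $(l_{\omega})_{\omega}$ depend on $n$.
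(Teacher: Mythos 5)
Your proposal is correct and follows essentially the same route as the paper: use Lemma~\ref{lemma:uniformly_discrete_product_sequences} with doubled indices to place $n$ pairwise disjoint translated bumps, encode each $\omega\in\{0,1\}^{n}$ as a norm-one element of $L^{\infty}(G)$, read off coordinates by evaluating at a translated $L^{1}$ test function, and run the same injectivity count against any $S^{2n}$-refining cover. The only (harmless) difference is that you use normalized indicator functions where the paper uses a continuous Urysohn bump $f$ serving simultaneously as test function and building block.
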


\begin{proof} Let $U$ be an open identity neighborhood in $G$ such that $UU \subseteq S$ and $U^{-1} = U$. By Lemma~\ref{lemma:uniformly_discrete_product_sequences}, there exists a sequence $(s_{n})_{n \in \mathbb{N}} \in S^{\mathbb{N}}$ such that $s_{m}\cdots s_{n} \notin \inte (S^{m-n})$ for all $m,n \in \mathbb{N}$ with $m > n$. Define $t_{n} \defeq s_{2n}s_{2n-1}$ for all $n \in \mathbb{N}$. According to Urysohn's lemma, there exists $f \in C_{c}(G)$ such that $\Vert f \Vert_{\infty} = 1$, $f(e_{G}) = 1$ and $K \defeq \spt (f) \subseteq U$. Of course, $c \defeq \int_{G} f^{2} \, d\mu > 0$. Consider the two-element open covering $\mathcal{V} \defeq \{ V_{0}, \, V_{1} \}$ of $B^{\infty}(G)$ where $V_{0} \defeq \{ l \in B^{\infty}(G) \mid \vert l(f) \vert < \frac{2c}{3} \}$ and $V_{1} \defeq \{ l \in B^{\infty}(G) \mid \vert l(f) \vert > \frac{c}{3} \}$. Let $n \in \mathbb{N}$. We show that $(S^{2n}:\mathcal{V})_{\alpha} \geq 2^{n}$. To this end, consider any $\delta \in \{ 0,1 \}^{n}$. Define $f_{\delta} \in C_{c}(G)$ by \begin{displaymath}
	f_{\delta} \defeq \sum_{j = 1}^{n} \delta (j) \cdot (f \circ \lambda_{G}(t_{j}\cdots t_{1})) .
\end{displaymath} We observe that $\spt (f \circ \lambda_{G}(t_{i}\cdots t_{1})) = (t_{i}\cdots t_{1})^{-1}(\spt f) = t_{1}^{-1}\cdots t_{i}^{-1}K$ for every $i \in \mathbb{N}$. Let $i,j \in \mathbb{N}$ such that $i < j$. Then \begin{align*}
	t_{i+1}^{-1}\cdots t_{j}^{-1} &= (s_{2i+2}s_{2i+1})^{-1}\cdots (s_{2j}s_{2j-1})^{-1} = s_{2i+1}^{-1}\cdots s_{2j}^{-1} \\
	&= (s_{2j}\cdots s_{2i+1})^{-1} \notin (\inte (S^{2(j-i)-1}))^{-1} .
\end{align*} Besides, $KK^{-1} \subseteq UU^{-1} = (UU)^{-1} \subseteq (\inte (S))^{-1} \subseteq (\inte (S^{2(j-i)-1}))^{-1}$. Hence, we conclude that $t_{i+1}^{-1}\cdots t_{j}^{-1} \notin KK^{-1}$ and therefore \begin{align*}
	\spt (f \circ \lambda_{G}(t_{i}\cdots t_{1})) \cap \spt (f \circ \lambda_{G}(t_{j}\cdots t_{1})) &= (t_{1}^{-1}\cdots t_{i}^{-1}K) \cap (t_{1}^{-1}\cdots t_{j}^{-1}K) \\
	&= t_{1}^{-1}\cdots t_{i}^{-1}(K \cap (t_{i+1}^{-1}\cdots t_{j}^{-1}K)) = \emptyset .
\end{align*} Thus, $(f \circ \lambda_{G}(t_{i}\cdots t_{1})) \cdot (f \circ \lambda_{G}(t_{j}\cdots t_{1})) = 0$. This particularly implies that $\Vert f_{\delta} \Vert_{\infty} \leq 1$. Accordingly, \begin{displaymath}
	l_{\delta} \colon L^{1}(G) \to \mathbb{K}, \, h \mapsto \int_{G} f_{\delta} \cdot h \, d\mu
\end{displaymath} is a member of $B^{\infty}(G)$. We compute that \begin{align*}
	\alpha (t_{i}\cdots t_{1},l_{\delta})(f) &= \int_{G} f_{\delta} \cdot (f \circ \lambda_{G}(t_{i}\cdots t_{1})) \, d\mu \\
	&= \sum_{j = 1}^{n} \delta (j) \cdot \int_{G} (f \circ \lambda_{G}(t_{i}\cdots t_{1})) \cdot (f \circ \lambda_{G}(t_{j}\cdots t_{1})) \, d\mu \\
	&= \delta (i) \cdot \int_{G} (f \circ \lambda_{G}(t_{i}\cdots t_{1})) \cdot (f \circ \lambda_{G}(t_{i}\cdots t_{1})) \, d\mu \\
	&= \delta (i) \cdot \int_{G} f^{2} \, d\mu = \delta (i) \cdot c
\end{align*} and furthermore conclude that \begin{displaymath}
	\alpha (t_{i}\cdots t_{1},l_{\delta}) \in V_{0} \ \Longleftrightarrow \ \delta (i) = 0 \ \Longleftrightarrow \ \delta (i) \neq 1 \ \Longleftrightarrow \ \alpha (t_{i}\cdots t_{1},l_{\delta}) \notin V_{1} 
\end{displaymath} for every $i \in \{ 1,\ldots,n \}$. Now, let $\mathcal{W}$ be any finite open covering of $B^{\infty}(G)$ such that $\mathcal{W}$ $S^{2n}$-refines $\mathcal{V}$ with respect to $\alpha$. We shall argue that $|\mathcal{W}| \geq 2^{n}$. For each $\delta \in \{ 0,1 \}^{n}$, fix some $Z_{\delta} \in \mathcal{W}$ where $l_{\delta} \in Z_{\delta}$. Consider distinct elements $\delta_{0},\delta_{1} \in \{ 0,1 \}^{n}$. There exists $i \in \{ 1,\ldots,n \}$ where $\delta_{0}(i) \ne \delta_{1}(i)$. Without loss of generality, suppose that $\delta_{0}(i) = 0$. As $l_{\delta_{0}} \in Z_{\delta_{0}}$ and $\alpha(t_{i}\cdots t_{1},l_{\delta_{0}}) \notin V_{1}$, it follows that $\alpha(t_{i}\cdots t_{1},Z_{\delta_{0}}) \nsubseteq V_{1}$ and thus necessarily $\alpha(t_{i}\cdots t_{1},Z_{\delta_{0}}) \subseteq V_{0}$. However, $\alpha(t_{i}\cdots t_{1},Z_{\delta_{1}}) \nsubseteq V_{0}$ as $l_{\delta_{1}} \in Z_{\delta_{1}}$ and $\alpha(t_{i}\cdots t_{1},l_{\delta_{1}}) \notin V_{0}$. Therefore, $Z_{\delta_{0}} \ne Z_{\delta_{1}}$. This establishes that $|\mathcal{V}| \geq 2^{n}$. Accordingly, $(S^{2n}:\mathcal{V})_{\alpha} \geq 2^{n}$. \end{proof}

\begin{thm}\label{theorem:positive.entropy} Suppose that $G$ is compactly generated. Let $S$ be a compact generating subset of $G$ and let $k \defeq \inf \{ n \in \mathbb{N} \mid S^{n} \textit{ identity neighborhood of } G \}$ (cf.~Lemma~\ref{lemma:compact_generating_systems}). If $G$ is not compact, then there exists a two-element open covering $\mathcal{V}$ of $B^{\infty}(G)$ such that \begin{displaymath}
	\eta (\alpha,S,\mathcal{V}) \geq \frac{1}{2k} .
\end{displaymath} \end{thm}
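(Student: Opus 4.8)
The goal is to derive Theorem~\ref{theorem:positive.entropy} from Lemma~\ref{lemma:entropy.for.neighborhoods}, so the plan is essentially to find an appropriate compact identity neighborhood generated from $S$ and then relate the growth of $(S^{N}:\mathcal{V})_{\alpha}$ to the growth of $(T^{2n}:\mathcal{V})_{\alpha}$ that the lemma controls.

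First I would set $T \defeq S^{k}$, where $k$ is as defined in the statement; by the definition of $k$, the set $T = S^{k}$ is a compact identity neighborhood of $G$. Since $G$ is not compact, I would verify that $\bigcup_{n \in \mathbb{N}} T^{n}$ is not compact: indeed $\bigcup_{n} T^{n} = \bigcup_{n} S^{kn} = \bigcup_{n} S^{n} = G$ because $S$ generates $G$, so this union is all of $G$, which is not compact by hypothesis. Hence $T$ satisfies the hypotheses of Lemma~\ref{lemma:entropy.for.neighborhoods}, and we obtain a two-element open covering $\mathcal{V}$ of $B^{\infty}(G)$ with $(T^{2n}:\mathcal{V})_{\alpha} \geq 2^{n}$ for all $n \in \mathbb{N}$.

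Next I would translate this lower bound, which is phrased in powers of $T$, into a lower bound in powers of $S$. Since $T^{2n} = S^{2kn}$, the inequality reads $(S^{2kn}:\mathcal{V})_{\alpha} \geq 2^{n}$ for all $n$. Taking base-$2$ logarithms and dividing by $2kn$ gives $\frac{\log_{2}(S^{2kn}:\mathcal{V})_{\alpha}}{2kn} \geq \frac{n}{2kn} = \frac{1}{2k}$. Now $\eta(\alpha,S,\mathcal{V}) = \limsup_{m \to \infty} \frac{\log_{2}(S^{m}:\mathcal{V})_{\alpha}}{m}$, and the $\limsup$ over all $m \in \mathbb{N}$ is at least the $\limsup$ over the subsequence $m = 2kn$, so
\begin{displaymath}
	\eta(\alpha,S,\mathcal{V}) \geq \limsup_{n \to \infty} \frac{\log_{2}(S^{2kn}:\mathcal{V})_{\alpha}}{2kn} \geq \frac{1}{2k},
\end{displaymath}
which is exactly the claimed bound.

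The only genuinely substantive point is the verification that $T = S^{k}$ meets the hypotheses of Lemma~\ref{lemma:entropy.for.neighborhoods}; everything else is a routine passage to a subsequence in the $\limsup$. For that verification I expect the crux to be confirming that $\bigcup_n T^n$ fails to be compact, and the clean way to see this is the identity $\bigcup_n T^n = G$ together with the non-compactness of $G$. (One should note that $k$ is finite: Lemma~\ref{lemma:compact_generating_systems} applied to a compact identity neighborhood contained in $G$ guarantees the existence of some $n$ with that neighborhood inside $\inte(S^n)$, so $S^n$ is itself an identity neighborhood, and hence the infimum defining $k$ is taken over a nonempty set.) With $k$ finite and $\mathcal{V}$ supplied by the lemma, the stated estimate follows immediately.
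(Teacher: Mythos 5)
Your proof is correct and follows essentially the same route as the paper: set $T = S^{k}$, apply Lemma~\ref{lemma:entropy.for.neighborhoods} to obtain $\mathcal{V}$ with $(T^{2n}:\mathcal{V})_{\alpha} = (S^{2kn}:\mathcal{V})_{\alpha} \geq 2^{n}$, and pass to the subsequence $m = 2kn$ in the $\limsup$. Your explicit verification that $\bigcup_{n} T^{n} = G$ is non-compact (using that $e_{G} \in S$, so the powers of $S$ are nested) and that $k$ is finite makes the hypotheses of the lemma more transparent than the paper's version, which leaves these points implicit.
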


\begin{proof} Let $T \defeq S^{k}$. By Lemma~\ref{lemma:entropy.for.neighborhoods}, there exists a two-element open covering $\mathcal{V}$ of $B^{\infty}(G)$ such that $(T^{2n}:\mathcal{V})_{\alpha} \geq 2^{n}$. For all $n \in \mathbb{N}$, it follows that $(S^{2kn}:\mathcal{V})_{\alpha} = (T^{2n}:\mathcal{V})_{\alpha} \geq 2^{n}$. Hence, \begin{displaymath}
	\eta(\alpha ,S) \geq \eta (\alpha , S, \mathcal{V}) \geq \limsup_{n \to \infty} \frac{\log_{2} (S^{2kn}:\mathcal{V})_{\alpha }}{2kn} \geq \frac{1}{2k} .\qedhere
\end{displaymath} \end{proof}

\begin{cor}\label{corollary:positive.entropy} Suppose $G$ to be compactly generated. Then $G$ is compact if and only if $\alpha$ has vanishing topological entropy. \end{cor}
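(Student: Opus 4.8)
The plan is to prove Corollary~\ref{corollary:positive.entropy} by combining Proposition~\ref{proposition:vanishing.entropy} with Theorem~\ref{theorem:positive.entropy}, establishing the two implications separately. The statement is an equivalence: $G$ compact if and only if $\alpha$ has vanishing topological entropy. One direction is already essentially packaged in an earlier result, while the other is the content of the theorem just proved.

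For the forward direction, suppose $G$ is compact. Then Proposition~\ref{proposition:vanishing.entropy} applies directly: it asserts that any continuous action of a compact group on a compact Hausdorff space has $\eta(\alpha,S) = 0$ for any compact generating subset $S \subseteq G$. Since $B^{\infty}(G)$ is a compact Hausdorff space (by Banach--Alaoglu, as recalled at the start of Section~\ref{section:canonical.action}) and $\alpha$ is a continuous action of $G$ on it (Proposition~\ref{proposition:canonical.action}), we conclude that $\alpha$ has vanishing topological entropy in the sense of Definition~\ref{definition:entropy.type}. This direction requires essentially no work beyond citing the relevant results.

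For the contrapositive of the reverse direction, suppose $G$ is not compact. I would fix a compact generating subset $S \subseteq G$ and set $k \defeq \inf \{ n \in \mathbb{N} \mid S^{n} \textnormal{ identity neighborhood of } G \}$ as in Theorem~\ref{theorem:positive.entropy}; this infimum is finite since some power $S^{n}$ must have non-empty interior (the same Baire-category argument underlying Lemma~\ref{lemma:compact_generating_systems} shows $S^{n}$ eventually contains an identity neighborhood, as $G = \bigcup_n S^n$ and $G$ is Baire). Theorem~\ref{theorem:positive.entropy} then produces a two-element open covering $\mathcal{V}$ of $B^{\infty}(G)$ with $\eta(\alpha,S,\mathcal{V}) \geq \tfrac{1}{2k} > 0$, whence $\eta(\alpha,S) \geq \tfrac{1}{2k} > 0$. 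Thus $\alpha$ has positive topological entropy, so in particular it does not have vanishing topological entropy. By Definition~\ref{definition:entropy.type}, the value of $\eta(\alpha,S)$ being zero is independent of the choice of compact generating set, so this conclusion does not depend on the particular $S$ chosen.

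The main subtlety to address is simply the well-definedness of the entropy-type notions: Definition~\ref{definition:entropy.type} stipulates that vanishing (respectively positive) topological entropy holds ``for some (and hence any)'' compact generating subset, a consistency guaranteed by Proposition~\ref{proposition:comparing.entropy}. There is no real obstacle here; the proof is a short assembly of prior results. I would write it as two clean implications, invoking Proposition~\ref{proposition:vanishing.entropy} for ``$G$ compact $\Rightarrow$ vanishing entropy'' and Theorem~\ref{theorem:positive.entropy} for ``$G$ non-compact $\Rightarrow$ positive (hence non-vanishing) entropy,'' and concluding the equivalence.
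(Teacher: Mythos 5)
Your proposal is correct and follows exactly the paper's own argument: Proposition~\ref{proposition:vanishing.entropy} for the forward implication and Theorem~\ref{theorem:positive.entropy} (in contrapositive form) for the converse. The extra remarks on the finiteness of $k$ and the independence of the entropy type from the choice of generating set are sound but not needed beyond what Definition~\ref{definition:entropy.type} and Lemma~\ref{lemma:compact_generating_systems} already provide.
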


\begin{proof} If $G$ is compact, then $\alpha$ has vanishing topological entropy due to Proposition~\ref{proposition:vanishing.entropy}. Conversely, if $\alpha$ has vanishing topological entropy, then $G$ is compact by Theorem~\ref{theorem:positive.entropy}. \end{proof}

As we are going to see, $\alpha$ has infinite topological entropy provided that $G$ is almost connected and non-compact (see Corollary~\ref{corollary:infinite.entropy}).

\begin{thm}\label{theorem:infinite.entropy} Suppose $G$ to be almost connected. Let $S$ be a compact generating subset of $G$. If $G$ is not compact, then \begin{displaymath}
	\sup \{ \eta(\alpha, S,\mathcal{V}) \mid \mathcal{V} \in \mathcal{C}(B^{\infty}(G)), \, |\mathcal{V}| = 2 \} = \infty .
\end{displaymath} \end{thm}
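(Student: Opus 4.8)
The plan is to upgrade the packing argument behind Lemma~\ref{lemma:entropy.for.neighborhoods} so that, instead of encoding one bit per two factors of the generating set, we encode \emph{arbitrarily many} bits per fixed number of factors, thereby driving the entropy to infinity. First I would reduce to a convenient generating set: let $k \defeq \inf\{n \in \mathbb{N} \mid S^n \textnormal{ is an identity neighborhood of } G\}$ and put $T \defeq S^k$, a compact identity neighborhood which still generates $G$. The per-covering inequality $\eta(\alpha, T, \mathcal{V}) \le k\,\eta(\alpha, S, \mathcal{V})$ established within the proof of Proposition~\ref{proposition:comparing.entropy} (with $m = k$ here) holds for each fixed $\mathcal{V}$ and does not change $|\mathcal{V}|$, so it suffices to prove $\sup\{\eta(\alpha, T, \mathcal{V}) \mid |\mathcal{V}| = 2\} = \infty$. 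The decisive consequence of almost connectedness enters at this point: by Lemma~\ref{lemma:generating.subgroups.of.almost.connected.groups}, since $G$ is non-compact, \emph{every} identity neighborhood $U$ of $G$ satisfies that $\bigcup_{n} U^n$ is non-compact. This is exactly what fails for a general compactly generated group, and it is what allows the construction to be run inside arbitrarily small neighborhoods.

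Now fix $M \in \mathbb{N}$. By continuity of multiplication at $e_G$ I would choose a compact symmetric identity neighborhood $U$ with $U^{M} \subseteq T$. Applying Lemma~\ref{lemma:uniformly_discrete_product_sequences} to $U$ (legitimate because $\bigcup_n U^n$ is non-compact) yields a sequence $(s_n) \in U^{\mathbb{N}}$ with $s_m \cdots s_n \notin \inte(U^{m-n})$ for all $m > n$. Setting $t_i \defeq s_{2i}s_{2i-1} \in U^2$ and $q_i \defeq t_i \cdots t_1 \in U^{2i}$, I would pick via Urysohn's lemma a function $f \in C_c(G)$ with $\Vert f \Vert_\infty = 1$, $f(e_G) = 1$, and support $K \defeq \spt(f)$ so small that $KK^{-1} \subseteq \inte(U)$, and then form the same two-element covering $\mathcal{V} = \{V_0, V_1\}$ and functionals $l_\delta$ (for $\delta$ ranging over finite binary strings) as in the proof of Lemma~\ref{lemma:entropy.for.neighborhoods}.

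Two estimates then finish the argument, both following the template of Lemma~\ref{lemma:entropy.for.neighborhoods}. For the \emph{separation}: whenever $i < j$ one has $q_j q_i^{-1} = s_{2j} \cdots s_{2i+1} \notin \inte(U^{2(j-i)-1}) \supseteq \inte(U) \supseteq KK^{-1}$, so the supports $q_i^{-1}K$ are pairwise disjoint. Hence $\Vert f_\delta \Vert_\infty \le 1$ and $\alpha(q_i, l_\delta)(f) = \delta(i) \int_G f^2 \, d\mu$, which lets distinct strings be told apart by any finite open covering $T^{2n}$-refining $\mathcal{V}$, exactly as before. For the \emph{density}: the choice $U^M \subseteq T$ gives $q_i \in U^{2i} \subseteq U^{2Mn} = (U^M)^{2n} \subseteq T^{2n}$ for every $i \le Mn$, so $Mn$ independent coordinates are available inside $T^{2n}$. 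Consequently $(T^{2n} : \mathcal{V})_\alpha \ge 2^{Mn}$, whence $\eta(\alpha, T, \mathcal{V}) \ge \limsup_{n \to \infty} \frac{\log_2 2^{Mn}}{2n} = \frac{M}{2}$. Letting $M \to \infty$ yields $\sup\{\eta(\alpha, T, \mathcal{V}) \mid |\mathcal{V}| = 2\} = \infty$, and the reduction above transfers this to $S$.

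The main obstacle is conceptual rather than computational: it is guaranteeing that the uniformly discrete sequence of Lemma~\ref{lemma:uniformly_discrete_product_sequences} can be produced inside an \emph{arbitrarily small} identity neighborhood $U$, since this is precisely what permits $Mn$ mutually $KK^{-1}$-separated points $q_1, \dots, q_{Mn}$ to be packed into $T^{2n}$. This is exactly where almost connectedness is indispensable via Lemma~\ref{lemma:generating.subgroups.of.almost.connected.groups}; for a merely compactly generated group a small $U$ might generate a relatively compact subgroup, Lemma~\ref{lemma:uniformly_discrete_product_sequences} would be unavailable, and the bound would not exceed the value $\frac{1}{2k}$ obtained in Theorem~\ref{theorem:positive.entropy}.
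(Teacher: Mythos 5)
Your proposal is correct and follows essentially the same route as the paper: both exploit Lemma~\ref{lemma:generating.subgroups.of.almost.connected.groups} to run the packing construction of Lemma~\ref{lemma:entropy.for.neighborhoods} inside a small compact identity neighbourhood $U$ with a high power $U^{M}\subseteq T=S^{k}$, so that $Mn$ independent coordinates fit into $T^{2n}$ and the entropy bound grows linearly in $M$. The only cosmetic difference is that you re-derive the inner construction of Lemma~\ref{lemma:entropy.for.neighborhoods} for $U$, whereas the paper invokes that lemma as a black box together with the monotonicity $(T^{2n}:\mathcal{V})_{\alpha}\geq(U^{2Mn}:\mathcal{V})_{\alpha}$.
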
 

\begin{proof} Let $m \in \mathbb{N}$. By Lemma~\ref{lemma:compact_generating_systems}, there exists $k \in \mathbb{N}$ such that $T \defeq S^{k}$ is an identity neighborhood in $G$. Furthermore, there exists a compact identity neighborhood $U$ of $G$ such that $U^{2km} \subseteq T$. Due to Lemma~\ref{lemma:generating.subgroups.of.almost.connected.groups}, it follows that $\bigcup_{n \in \mathbb{N}} U^{n}$ is not compact. Hence, by Lemma~\ref{lemma:entropy.for.neighborhoods}, there exists a finite open covering $\mathcal{V}$ of $B^{\infty}(G)$ such that $(U^{2n}:\mathcal{V})_{\alpha} \geq 2^{n}$. For all $n \in \mathbb{N}$, it follows that $(S^{2kn}:\mathcal{V})_{\alpha} = (T^{2n}:\mathcal{V})_{\alpha} \geq (U^{4kmn}:\mathcal{V})_{\alpha} \geq 2^{2kmn}$. Hence, \begin{displaymath}
	\eta(\alpha, S, \mathcal{V}) \geq \limsup_{n \to \infty} \frac{\log_{2} (S^{2kn}:\mathcal{V})_{\alpha}}{2kn} \geq m .\qedhere
\end{displaymath} \end{proof}

\begin{cor}\label{corollary:infinite.entropy} Suppose $G$ to be almost connected. Then $G$ is non-compact if and only if $\alpha$ has infinite topological entropy. \end{cor}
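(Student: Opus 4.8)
The plan is to derive this corollary directly from Theorem~\ref{theorem:infinite.entropy} and Proposition~\ref{proposition:vanishing.entropy}, in complete analogy with the proof of Corollary~\ref{corollary:positive.entropy}. Recall that, by Definition~\ref{definition:entropy.type} together with Proposition~\ref{proposition:comparing.entropy}, $\alpha$ has infinite topological entropy if and only if $\eta(\alpha, S) = \infty$ for one -- equivalently every -- compact generating subset $S \subseteq G$. It therefore suffices to fix any such $S$ and reason about the single quantity $\eta(\alpha, S)$.

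For the forward implication, suppose that $G$ is non-compact. Since $G$ is almost connected, Theorem~\ref{theorem:infinite.entropy} applies and yields
\begin{displaymath}
	\sup \{ \eta(\alpha, S,\mathcal{V}) \mid \mathcal{V} \in \mathcal{C}(B^{\infty}(G)), \, |\mathcal{V}| = 2 \} = \infty .
\end{displaymath}
As the supremum defining $\eta(\alpha, S)$ ranges over all of $\mathcal{C}(B^{\infty}(G))$, it dominates the restricted supremum over two-element coverings; hence $\eta(\alpha, S) = \infty$, that is, $\alpha$ has infinite topological entropy.

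For the converse I would argue by contraposition. If $G$ is compact, then Proposition~\ref{proposition:vanishing.entropy} gives $\eta(\alpha, S) = 0$, so in particular $\eta(\alpha, S) \neq \infty$ and $\alpha$ does not have infinite topological entropy. Equivalently, infinite topological entropy forces $G$ to be non-compact, which is the remaining direction.

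I expect no genuine obstacle here, as both directions are immediate consequences of results already established. The only point deserving a moment's care is the reduction in the first paragraph: one must invoke Proposition~\ref{proposition:comparing.entropy} to know that the property of having infinite topological entropy is independent of the chosen compact generating set, so that the conclusion of Theorem~\ref{theorem:infinite.entropy}, obtained for a fixed $S$, indeed certifies the $S$-independent notion introduced in Definition~\ref{definition:entropy.type}.
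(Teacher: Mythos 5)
Your proof is correct and follows essentially the same route as the paper, which simply cites Proposition~\ref{proposition:vanishing.entropy} and Theorem~\ref{theorem:infinite.entropy} as an immediate consequence; you have merely spelled out the two implications (and the harmless point about independence of the generating set) in more detail.
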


\begin{proof} This is an immediate consequence of Proposition~\ref{proposition:vanishing.entropy} and Theorem~\ref{theorem:infinite.entropy}. \end{proof}


\small

\sloppy

\printbibliography

\end{document}